\newcommand{\R}{{\mathbb R}}
\newcommand{\N}{{\mathbb N}}
\def\BbbP{{{\rm I} \!  {\rm P}}}
\def\BbbE{{{\rm I}  \! {\rm E}}}
\newcommand{\Var}{{\rm Var}}
\newcommand{\claw}{\stackrel{\mathcal{D}}{\longrightarrow}}
\newcommand{\cpr}{\stackrel{\mathcal{P}}{\longrightarrow}}
\def \P{\BbbP}
\def \E{\BbbE}
\newtheorem{theorem}{Theorem}
\newtheorem{lemma}{Lemma}[section]
\newtheorem{corollary}[lemma]{Corollary}
\theoremstyle{definition}
\newtheorem{remark}{Remark}
\begin{document}

\title[Weighted Test Statistics]
{Power of Weighted Test Statistics for Structural Change in Time Series} 
\author[H. Dehling]{Herold Dehling}
\author[K. Vuk]{Kata Vuk}
\author[M. Wendler]{Martin Wendler}
\today
\address{
Fakult\"at f\"ur Mathematik, Ruhr-Universit\"at Bo\-chum,  Universit\"atsstra\ss e 150,
44780 Bochum, Germany}
\email{herold.dehling@ruhr-uni-bochum.de}
\address{
Fakult\"at f\"ur Mathematik, Ruhr-Universit\"at Bo\-chum, Universit\"atsstra\ss e 150,
44780 Bochum, Germany}
\email{kata.vuk@ruhr-uni-bochum.de}
\address{Institut f\"ur Mathematische Stochastik, Otto-von-Guericke-Universit\"at Magdeburg, Universit\"atsplatz 2, 39106 Magdeburg, Germany}
\email{martin.wendler@ovgu.de}


\keywords{Weighted change-point tests, time series, power of tests, U-statistics, local alternatives}
\maketitle

\begin{abstract}
We investigate the power of some common change-point tests as a function of the location of the change-point. The test statistics are maxima of weighted U-statistics, with the CUSUM test and the Wilcoxon change-point test as special examples. We study the power under local alternatives, where we vary both the location of the change-point and the magnitude of the change. We quantify in which way weighted versions of the tests are more powerful when the change occurs near the beginning or the end of the time interval, while losing power against changes in the center. 
\end{abstract}

\section{Introduction} 
In this paper, we will compare the power of some standard change point tests when the weight functions vary. We will consider alternatives where a jump occurs in the center of the observation period as well as alternatives where a jump occurs very early or very late. 
According to the change-point folklore, very early and very late changes are better detected by tests with weights that increase near the boundary of the observation period.
In this paper, we aim to shed some light on this problem, both by precise mathematical results and by simulations. We will do so by considering local alternatives that express the phenomenon of change points near the border of the observation period. 
Our results indicate that optimal weights depend on the rate at which the change-point converges to the border of the observation time. 

We investigate the model of at most one change, 
assuming that the data are generated by the signal plus noise model
\begin{equation}
X_i=\mu_i+\xi_i,\; i\geq 1,
\label{eq:basic-model}
\end{equation}
where $(\mu_i)_{i\geq 1}$ is an unknown signal, and where
$(\xi_i)_{i\geq 1}$ is a mean zero i.i.d.\ process. Based on the observations $X_1,\ldots, X_n$, we want to test the hypothesis
\[
  H:\; \mu_1=\ldots=\mu_n
\]
that there is no change in the location during the observation period $\{1,\ldots,n\}$ against the alternative that there is a change at some unknown point in time $k^\ast$, i.e.
\[
  A:\; \mu_1=\ldots=\mu_{k^\ast} \neq \mu_{k^\ast+1} =\ldots=\mu_n, \mbox{ for some } k^\ast \in \{1,\ldots, n-1\}.
\]
We will specifically consider alternatives where the location $k^\ast$ as well as the height $\Delta=\mu_{k^\ast+1} - \mu_{k^\ast}$ of the change is allowed to vary with the sample size. Thus, strictly speaking, our model is a triangular array where the signal is given by $(\mu_{n,i})_{1\leq i\leq n, n\geq 1}$. We study U-process based test statistics defined as
\[
  \max_{1\leq k < n} \frac{1}{ n^{3/2} \big(  \frac{k}{n}(1-\frac{k}{n}) \big)^\gamma  } \sum_{i=1}^k \sum_{j=k+1}^n h(X_i,X_j),
\]
where $0\leq \gamma < \frac{1}{2}$ is a tuning parameter and where $h:\mathbb{R}^2 \rightarrow \mathbb{R}$ is a kernel function. We consider kernels of the type 
\[h(x,y)=g(y-x),\]
where $g$ is an odd function. That type of kernel function covers many test statistics including CUSUM and Wilcoxon. The parameter $\gamma$ defines the strength of the weight near the borders of the observation period. The greater $\gamma$ is, the higher are the weights at the border. The limit $\gamma=\frac{1}{2}$ is exceptional because in this case the test statistics asymptotic distribution is an extreme value distribution. The choice $\gamma=0$ yields the non-weighted version of the test.

The asymptotic distribution of the U-process based test statistic with kernel $h(x,y)$ has been studied by various authors, both for i.i.d.\ data and for data with serial correlations. For i.i.d.\ data, the theory is summarized in the seminal monograph by \cite{CsH:1997}, where also dependent data are treated in connection with the CUSUM test. 
For short-range dependent data and general $U$-statistic based tests, \cite{DFGW:2015} investigated the case $\gamma=0$, and \cite{DVW:2022} treated the special weighted case $\gamma=\frac{1}{2}$.  
Most research on change-point tests has been devoted to the distribution  of test statistics under the null hypothesis of no change. 
In the case of i.i.d.\ data, \cite{S:1991} studied non-weighted $U$-statistics processes for detecting a change in the distribution under contiguous alternatives, where the location of the change point is in the center of the observation period.  
Again for i.i.d.\ data, \cite{F:1994} studied the power of non-weighted tests based on anti-symmetric U-statistics under local alternatives, allowing both the location and the height of the jump to vary with the sample size, including possible jumps near the boundary of the observation period.
For long-range dependent processes, \cite{DRT:2013} and \cite{DRT:2017} studied the asymptotic distribution of the Wilcoxon test statistic under the hypothesis as well as under the alternative.  
\cite{HRZ:2021} studied the norms of weighted functional CUSUM processes and derived the asymptotic distribution under the hypothesis of no change as well as under local alternatives in the presence of a change in in the covariance. 
\cite{RGLA:2011} considered weighted versions of CUSUM tests for detecting early changes, and compared various tests via simulation studies.
\cite{HMR:2020} investigated the power of non-weighted CUSUM tests under local alternatives for short-range dependent data. Horv\'ath et al.\ also investigated the power of so-called R\'enyi change-point tests, which are CUSUM tests weighted even heavier at the end of the observation period. Related to weighted and non-weighted U-statistics based processes, \cite{RW:2020} address epidemic changes and \cite{BGH:2009} address changes in mean of the covariance structure of a linear process. \cite{G:2000b} compared the power of U-statistic based change-point tests for the online and offline scenario and described the large sample behavior of these tests under local alternatives. However, these considerations don't include the contiguous alternative with $O(n^{-1/2})$ size changes. In a Monte Carlo simulation study, \cite{XLX:2014} investigated how the change-point location influences the ability of Wilcoxon based Pettitt test.

In the present paper, we investigate changes where the change point occurs on the scale of $n^\kappa$, for different values of $\kappa\in (0,1]$. In this way, for $\kappa=1$, one obtains changes in the center of the observation period, while $0<\kappa<1$ corresponds to very early changes. 
We study both the case of fixed size jumps and jumps whose size decreases with increasing sample size.  
In addition to comparing power functions of different tests, we study the power function in relation to the envelope power. The envelope power is defined as the maximal power that can be achieved by testing the hypothesis of no change against a fixed alternative $(k,\Delta)$, where $k$ denotes the location and $\Delta$ the height of the change. 

\section{Preliminary remarks and definitions}

Before we present our main results in the next sections, we introduce some notations. We consider the signal plus noise model \eqref{eq:basic-model} and test the hypothesis \[H:\; \mu_1=\ldots=\mu_n \] against the alternative \[A:\; \mu_1=\ldots=\mu_{k^\ast} \neq \mu_{k^\ast+1} =\ldots=\mu_n, \mbox{ for some } k_n^\ast \in \{1,\ldots, n-1\}.\] The next sections attend to two different types of alternative. 

The section \textit{Small change after fix proportion of time} deals with local alternatives in which the time of change is proportional to the sample size and the jump height decreases as the sample size increases. We call this alternative $A_1$ and define more precisely
 \begin{align*}
 A_1: \; \mu_1=\ldots=\mu_{k_n^\ast} \neq \mu_{k_n^\ast+1} =\ldots=\mu_n, \text{ with } k_n^*=[\tau^*n] \text{ and } \Delta_n=\mu_{k_n^*+1} -\mu_{k_n^*}= \frac{c}{\sqrt{n}}, 
 \end{align*}
where $\tau^* \in (0,1)$ and $c$ is a constant. 

In the section \textit{Early change with fixed height}, we consider another type of alternative in which the jump height is kept constant, while the time of change moves closer to the border of the observation range. We model this alternative as follows
 \begin{align*}
 A_2: \; &\mu_1=\ldots=\mu_{k_n^\ast} \neq \mu_{k_n^\ast+1} =\ldots=\mu_n, \text{ with } k_n^* \approx cn^{\kappa}, \text{ meaning that } \frac{k_n^*}{cn^{\kappa}}\rightarrow 1,\\ &\text{ and } \Delta_n=\mu_{k_n^*+1} -\mu_{k_n^*} \equiv \Delta, 
 \end{align*}
where $c$ is a constant and where the parameter $\kappa$ is defined as 
\begin{align*}
\kappa=\frac{1-2\gamma}{2(1-\gamma)},~\gamma\in [0,\tfrac{1}{2}).
\end{align*} 
Note that by definition $\kappa \in (0,\frac{1}{2}].$ 

In short, we can write the corresponding model as 
\begin{align} \label{model}
 X_i=\left\{
    \begin{array}{ll}
      \mu+ \xi_i & \mbox{ for } i\leq k_n^\ast \\[2mm]
      \mu+\Delta_n+\xi_i & \mbox{ for } i\geq k_n^\ast+1,   
    \end{array}
  \right.
\end{align}
where $(\xi_i)_{\geq1} $ is a mean zero i.i.d.\ process and where $k_n^*$ and $\Delta_n$ are chosen as in $A_1$ or $A_2$. 
\begin{remark}
The specific choice of $\kappa$ in $A_2$ leads to a non trivial limit distribution under the alternative.
\end{remark}
In order to test $H$ vs.\@ $A_i,~i\in \{1,2\}$, we use the test statistic
\begin{align}
G^{\gamma}_n(k):= \frac{1}{n^{3/2} \big( \frac{k}{n} (1-\frac{k}{n}\big)\big)^\gamma } \sum_{i=1}^k \sum_{j=k+1}^n g(X_j-X_i), \label{wgeneral} 
\end{align}
where $\gamma \in [0,1/2)$ and where $g$ is an odd function, i.e.\ $g(-x)=-g(x).$ Note that the case $\gamma=0$ refers to the unweighted test statistic. We determine the limiting distribution of the test statistic under the the alternatives $A_1$ and $A_2$. In that proceeding, slightly different terms appear depending on whether $k\leq k_n^*$ or $k\geq k_n^*$. For the sake of simplicity we combine these terms into one function. We define $\phi_n: \{1,\ldots,n \} \rightarrow \N$ by
\[
  \phi_n(k):=\left\{
  \begin{array}{ll}
    k (n-k_n^\ast) & \mbox{ for } k\leq k_n^\ast \\[2mm]
    k_n^\ast (n-k) & \mbox{ for } k\geq k_n^\ast,
  \end{array}
  \right.
\] 
and analogously the continuous version $\phi_{\tau^\ast}:[0,1] \rightarrow \R$ by
\[
  \phi_{\tau^\ast}(\lambda)=\left\{  
  \begin{array}{ll}
   \lambda(1-\tau^\ast) & \mbox{ for } \lambda \leq \tau^\ast \\
   \tau^\ast(1-\lambda) & \mbox{ for } \lambda \geq \tau^\ast.
  \end{array}
  \right. 
\]   
For later use, we denote a Wiener process by $\{W(\lambda), 0 \leq \lambda \leq 1 \}$ and a Brownian bridge process by $\{W^{(0)}(\lambda), 0 \leq \lambda \leq 1 \}$. 

\section{Small changes after fixed proportion of a sample} \label{sec:A1}
In this section we establish the asymptotic distribution of $\max_{1\leq k < n } G_n^{\gamma}(k)$ under the alternative $A_1$, i.e.\ where $k_n^*=[\tau^*n]$ and $\Delta_n=\frac{c}{\sqrt{n}}$. For the special CUSUM and Wilcoxon kernel functions, the results are stated in Corollary \ref{cusum_thm:A1_G} and \ref{wilcoxon_thm:A1_G}.

\begin{theorem} \label{thm:A1_G}
We consider model \eqref{model} under $A_1$. Assume that $g(\xi_2-\xi_1)$ has finite second moments. Moreover, assume that $\Var(h_1(\xi_1))\rightarrow 0$ and that $c_g= \lim_{n \rightarrow \infty} \sqrt{n} u(\Delta_n)$ exists. Then, for $0\leq \gamma < \frac{1}{2}$ and as $n\rightarrow \infty$,
\begin{align*}
\max_{1\leq k < n } G_n^{\gamma} (k) \claw \sup_{0 \leq \lambda \leq 1} \frac{1}{(\lambda (1-\lambda))^{\gamma}} [\sigma W^{(0)}(\lambda)+c_g \phi_{\tau^*}(\lambda)], 
\end{align*} 
where $\sigma^2= \E (g_1^2(\xi_1))>0$ and
\begin{align*}
g_1(x) &= \E[ g(\xi-x)] - \E[g(\xi-\eta)], \\
u(\Delta_n) & =\E[g(\xi-\eta+\Delta_n)-g(\xi-\eta)], \\
 h_1(x) &= \E[ g(\xi-x+\Delta_n) -g(\xi-x) ] -u(\Delta_n),
\end{align*}
where $\xi$ and $\eta$ are independent and have the same distribution as $\xi_1$. 
\end{theorem}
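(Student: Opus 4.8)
The plan is to decompose the double-sum kernel statistic into a Hájek-type projection plus a negligible degenerate remainder, and then control the leading term as a weighted partial-sum process. First I would write, for each split point $k$,
\[
\sum_{i=1}^k\sum_{j=k+1}^n g(X_j-X_i)
= \sum_{i=1}^k\sum_{j=k+1}^n \big\{ g(\xi_j-\xi_i+\delta_{ij}) \big\},
\]
where $\delta_{ij}=\mu_j-\mu_i$ equals $\Delta_n$ when $i\le k_n^\ast<j$ and $0$ otherwise, so that the number of ``active'' index pairs is exactly $\phi_n(k)$. The Hoeffding decomposition of the kernel $g(\xi_j-\xi_i+\delta_{ij})$ around its mean yields a constant term contributing $\phi_n(k)\, u(\Delta_n)$ (plus, on the null pairs, a zero mean), a linear term built from the functions $g_1$ and $h_1$, and a canonical degenerate term. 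Dividing by $n^{3/2}$, the constant term becomes $n^{-3/2}\phi_n(k)u(\Delta_n)$, which under $A_1$ with $k=[\lambda n]$ and $\sqrt n\,u(\Delta_n)\to c_g$ converges to $c_g\,\phi_{\tau^\ast}(\lambda)$ uniformly in $\lambda$; this is the deterministic drift in the limit.

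Next I would treat the linear (Hájek projection) term. Writing out the projection onto the $\xi_i$'s, the terms with $\delta_{ij}=0$ produce the usual CUSUM-type expression $\sum_{i=1}^k g_1(\xi_i)(n-k) - \sum_{j=k+1}^n g_1(\xi_j)\,k$ after antisymmetry of $g$ (hence oddness giving $\E[g_1]$ consistency), which after normalization by $n^{3/2}$ and the functional CLT converges in $D[0,1]$ to $\sigma\,W^{(0)}(\lambda)$, where $\sigma^2=\E[g_1^2(\xi_1)]$. The terms involving $h_1$ carry an extra factor that is $O(\sqrt{\Var(h_1(\xi_1))})$ in the relevant $L^2$ norm; since $\Var(h_1(\xi_1))\to 0$ by hypothesis, these are asymptotically negligible uniformly in $k$ after normalization. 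The degenerate part is handled by a second-moment bound: $\E$ of the square of the normalized canonical term is $O(n^{-1})$ times a sum over $O(n^2)$ pairs with kernel variance bounded by $\E[g^2(\xi_2-\xi_1)]<\infty$, so it vanishes; here I would invoke the maximal-inequality machinery for degenerate $U$-statistic processes already used in \cite{DFGW:2015} for the $\gamma=0$ case to upgrade the pointwise bound to a uniform one.

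Having established the weak convergence
\[
G_n^{\gamma}(k) = \frac{1}{(\tfrac{k}{n}(1-\tfrac{k}{n}))^\gamma}\Big( \sigma\,\tilde W_n(\tfrac{k}{n}) + c_g\,\tfrac{\phi_n(k)}{n}\cdot\tfrac{1}{\sqrt n}\sqrt n\,u(\Delta_n) \cdot(1+o(1)) + o_P(1)\Big),
\]
jointly in $D[0,1]$, I would pass to the supremum. The continuous mapping theorem does not apply directly because the weight $(\lambda(1-\lambda))^{-\gamma}$ blows up at the endpoints; this is the main obstacle. To handle it I would split $\sup_{1\le k<n}$ into a central part $\varepsilon\le k/n\le 1-\varepsilon$, where the weight is bounded and continuous so the continuous mapping theorem gives convergence to the corresponding sup over $[\varepsilon,1-\varepsilon]$, and two boundary parts $k/n<\varepsilon$ and $k/n>1-\varepsilon$. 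On the boundary I would show the contribution is negligible (does not exceed the central value) with probability tending to one, using a Hájek–Rényi / Chibisov–O'Reilly type inequality for the weighted Brownian bridge together with the tightness of $\tilde W_n$ near the endpoints --- this is where $\gamma<\tfrac12$ is essential, since $(\lambda(1-\lambda))^{-\gamma}W^{(0)}(\lambda)\to 0$ a.s.\ as $\lambda\to 0,1$ exactly in that range. The drift term $c_g\phi_{\tau^\ast}(\lambda)(\lambda(1-\lambda))^{-\gamma}$ likewise vanishes at the endpoints for $\gamma<\tfrac12$, so neither component forces the sup to the boundary. Letting $\varepsilon\to 0$ after $n\to\infty$ then yields the stated limit $\sup_{0\le\lambda\le1}(\lambda(1-\lambda))^{-\gamma}[\sigma W^{(0)}(\lambda)+c_g\phi_{\tau^\ast}(\lambda)]$.
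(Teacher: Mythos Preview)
Your approach is correct and shares the basic skeleton with the paper's proof---Hoeffding decomposition plus control of the resulting pieces---but the organization differs in a way worth noting. The paper first splits $G_n^\gamma(k)$ into the ``null'' process $I_n(k)=n^{-3/2}\sum_{i\le k}\sum_{j>k}g(\xi_j-\xi_i)$ and the perturbation $J_n(k)=n^{-3/2}\sum_{i\le k}\sum_{j>k}\big[g(X_j-X_i)-g(\xi_j-\xi_i)\big]$, then disposes of the weighted $I_n$ in one stroke by citing the weighted functional CLT for two-sample $U$-processes (Theorem~2.11 in Cs\"org\H{o}--Horv\'ath), so that no $\varepsilon$-truncation or Chibisov--O'Reilly argument at the boundary is needed for the stochastic part. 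The Hoeffding decomposition is applied only to the difference kernel $h(x,y)=g(y-x+\Delta_n)-g(y-x)$ inside $J_n$, whose pieces are small enough (since $\Var(h_1)\to 0$ and the degenerate remainder has variance $O(n^{2\gamma-1})$) that direct H\'ajek--R\'enyi, Chebyshev and Doob martingale bounds kill $\max_k\,|J_n(k)-\E J_n(k)|/(\tfrac{k}{n}(1-\tfrac{k}{n}))^\gamma$ outright. Your route---Hoeffding-decompose the full kernel, establish the unweighted FCLT for the $g_1$ linear part, then upgrade to the weighted sup via an $\varepsilon$-truncation---is valid but effectively re-proves the weighted FCLT you could have cited; the paper's $I_n+J_n$ split is shorter precisely because it isolates the one piece for which the weighted limit is already available off the shelf.
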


\begin{remark}
(i) $h_1$ and $u$ are obtained from Hoeffding's decomposition, applied to the kernel 
$ h(x,y) =g(y-x+\Delta_n)-g(y-x).$ More details are given in the proof of Theorem \ref{thm:A1_G} in Section \ref{sec:proofs}. \\ 
(ii) For $c=0$ we obtain the limit under the null hypothesis of stationarity. In order to calculate the asymptotic critical values, we need to determine the quantiles of the distribution of
\[
  \sup_{0\leq \lambda \leq 1} \frac{1}{(\lambda(1-\lambda))^\gamma} W^{(0)}(\lambda).
\]
The $\alpha$-quantiles, $\alpha\in (0.01,0.05,0.1)$, for various choices of $\gamma$, are tabulated in Table \ref{table.quantiles}.  
\end{remark}


\begin{table}[h]
\begin{tabular}{|l|ccc|}
\hline
\diagbox{$\gamma$}{$\alpha$}
		  & 0.1    & 0.05   & 0.01     \\ 
\hline 
 0        & 1.05   & 1.20   & 1.51 \\
 0.1      & 1.24   & 1.41   & 1.72 \\
 0.2      & 1.45   & 1.63   & 2.05 \\
 0.3      & 1.75   & 1.96   & 2.40 \\
 0.4      & 2.10   & 2.31   & 2.83 \\ 
 \hline
\end{tabular}
\vspace{2mm}
\caption[Table]{$\alpha$-Quantiles of $\sup_{0\leq \lambda \leq 1} \frac{1}{(\lambda(1-\lambda))^\gamma} W^{(0)}(\lambda) $ for different parameters $\gamma$, based on 10,000 repetitions.} 
\label{table.quantiles} 
\end{table}

Theorem \ref{thm:A1_G} covers both the CUSUM and Wilcoxon test statistic. Choosing $g(x)=x$ leads to the CUSUM test statistic and satisfies the assumptions. We have $u(\Delta_n)=E[\Delta_n]=\Delta_n$ and $c_g=\lim_{n\rightarrow \infty} \sqrt{n} \Delta_n=c$, as $\Delta_n=\frac{c}{\sqrt{n}}$. Moreover, \[
h_1(x)=\E[g(\xi-x+\Delta_n)-g(\xi-x)]-u(\Delta_n) = \E[\xi-x+\Delta_n-(\xi-x)]-\Delta_n = 0.
\] Thus, we can deduce the following corollary for the weighted CUSUM test statistic.
\begin{corollary} \label{cusum_thm:A1_G}
Under the assumptions of Theorem \ref{thm:A1_G}, it holds 
\begin{align*}
\max_{1\leq k < n} \frac{1}{n^{3/2} \big( \frac{k}{n} (1-\frac{k}{n}\big)\big)^\gamma } \sum_{i=1}^k \sum_{j=k+1}^n (X_j-X_i) \claw
  \sup_{0\leq \lambda \leq 1} \frac{1}{(\lambda(1-\lambda))^\gamma} \left[ \sigma W^{(0)}(\lambda) +c\phi_{\tau^\ast}(\lambda)  \right] ,
\end{align*}
where $\sigma^2=\Var(\xi_1)< \infty$.
\end{corollary}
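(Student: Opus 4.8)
The plan is to obtain the corollary as a direct specialization of Theorem~\ref{thm:A1_G} to the linear kernel $g(x)=x$; the only work is to verify the hypotheses of that theorem and to evaluate the constants $c_g$ and $\sigma^2$ that enter the limit. First I would check the assumptions. For $g(x)=x$ we have $g(\xi_2-\xi_1)=\xi_2-\xi_1$, so the requirement that $g(\xi_2-\xi_1)$ have finite second moment is equivalent to $\Var(\xi_1)<\infty$, which is assumed. Using that $\xi$ and $\eta$ are i.i.d.\ copies of the mean-zero variable $\xi_1$, one computes $u(\Delta_n)=\E[(\xi-\eta+\Delta_n)-(\xi-\eta)]=\Delta_n$ and $h_1(x)=\E[(\xi-x+\Delta_n)-(\xi-x)]-u(\Delta_n)=0$, hence $\Var(h_1(\xi_1))=0$, so the condition $\Var(h_1(\xi_1))\to 0$ holds in the strongest possible way. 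Since $\Delta_n=c/\sqrt n$ under $A_1$, the limit $c_g=\lim_{n\to\infty}\sqrt n\,u(\Delta_n)=\lim_{n\to\infty}\sqrt n\,\Delta_n=c$ exists. Thus all hypotheses of Theorem~\ref{thm:A1_G} are met.

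Next I would identify the variance constant. With $g(x)=x$ we get $g_1(x)=\E[g(\xi-x)]-\E[g(\xi-\eta)]=(\E\xi-x)-0=-x$ (using $\E\xi_1=0$), so $\sigma^2=\E[g_1^2(\xi_1)]=\E[\xi_1^2]=\Var(\xi_1)$, which is strictly positive whenever $\xi_1$ is non-degenerate. Substituting $c_g=c$ and $\sigma^2=\Var(\xi_1)$ into the conclusion of Theorem~\ref{thm:A1_G}, and observing that for $g(x)=x$ the statistic $G_n^{\gamma}(k)$ in \eqref{wgeneral} is literally $\frac{1}{n^{3/2}(\frac{k}{n}(1-\frac{k}{n}))^{\gamma}}\sum_{i=1}^{k}\sum_{j=k+1}^{n}(X_j-X_i)$, yields the asserted convergence $\max_{1\le k<n}G_n^{\gamma}(k)\claw \sup_{0\le\lambda\le1}\frac{1}{(\lambda(1-\lambda))^{\gamma}}[\sigma W^{(0)}(\lambda)+c\,\phi_{\tau^\ast}(\lambda)]$.

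I do not expect any genuine obstacle here: the argument is pure bookkeeping once Theorem~\ref{thm:A1_G} is available. The only point worth flagging is that the vanishing of $h_1$ --- equivalently, the vanishing of the degenerate part in Hoeffding's decomposition of the kernel $g(y-x+\Delta_n)-g(y-x)$ for the linear $g$ --- is exactly what makes the hypothesis $\Var(h_1(\xi_1))\to 0$ automatic, so no separate cancellation estimate is needed; and one should keep in mind that $\sigma^2>0$ requires $\xi_1$ non-degenerate, which is implicit in the setup.
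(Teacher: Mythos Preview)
Your proposal is correct and follows essentially the same approach as the paper: verify the assumptions of Theorem~\ref{thm:A1_G} for $g(x)=x$ by computing $u(\Delta_n)=\Delta_n$ and $h_1\equiv 0$, then read off $c_g=c$. You even go slightly further than the paper by explicitly deriving $g_1(x)=-x$ to identify $\sigma^2=\Var(\xi_1)$, which the paper simply asserts in the corollary statement.
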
 
 To obtain the Wilcoxon test statistic, choose $g(x)=1_{\{0 \leq x\}}- \frac{1}{2}$. Then
\begin{multline*}
u(\Delta_n) =\E [ 1_{\{ \eta - \Delta_n \leq \xi \}}- 1_{\{\eta \leq \xi\}}] =\E[1_{\{ \eta - \Delta_n < \xi \leq \eta \}}]= P(\eta - \Delta_n < \xi \leq \eta)  \\\
= \int_{\mathbb{R}} (F(y)-F(y-\Delta_n)) dF(y) \approx -\Delta_n \int_{\mathbb{R}} f^2(y)dy, 
\end{multline*}
where $F$ is the distribution function and $f$ the density function of $\xi.$ This yields $c_g= c \int_{\mathbb{R}} f^2(y)dy.$ Furthermore, 
\begin{multline*}
 |h_1(x)| =|\E [1_{\{ 0 \leq \xi-x+\Delta_n\}}-1_{\{0 \leq \xi-x \}}]-u(\Delta_n) |= |\P(x-\Delta_n< \xi \leq x)-u(\Delta_n) |\\\ 
= |F(x)-F(x-\Delta_n)-u(\Delta_n) |= \big| \Delta_n \frac{F(x)-F(x-\Delta_n)}{\Delta_n}-u(\Delta_n) \big| \approx |\Delta_n f(x)-u(\Delta_n)| \\\
= \big| \Delta_n \Big( \int_{\mathbb{R}} f^2(y)dy-f(x) \Big) \big| \leq |c \Delta_n|,
\end{multline*} where $c$ is a finite constant if the density is bounded. Thus, $\Var(h_1(\xi_1))\rightarrow 0$. As all required assumptions are satisfied, we derive the following corollary.
 \begin{corollary} \label{wilcoxon_thm:A1_G}
Assume that $\xi_1$ has bounded density. Under the assumptions of Theorem \ref{thm:A1_G} it holds 
\begin{multline*}
\max_{1\leq k < n} \frac{1}{n^{3/2} \big( \frac{k}{n} (1-\frac{k}{n}\big)\big)^\gamma } \sum_{i=1}^k \sum_{j=k+1}^n  \Big(1_{\{X_i\leq X_j\}}  -\frac{1}{2}\Big) \\\
 \claw  \sup_{0\leq \lambda \leq 1} \frac{1}{(\lambda(1-\lambda))^{\gamma}} \left[ \frac{1}{\sqrt{12}} W^{(0)}(\lambda) + c\phi_{\tau^\ast}(\lambda) \int_{\mathbb{R}} f^2(y)dy \right].
\end{multline*}
\end{corollary}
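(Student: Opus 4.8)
The plan is to obtain Corollary \ref{wilcoxon_thm:A1_G} as a direct specialization of Theorem \ref{thm:A1_G} to the kernel $g(x)=1_{\{0\le x\}}-\tfrac12$. For this choice one has $g(X_j-X_i)=1_{\{X_i\le X_j\}}-\tfrac12$, so the left-hand side of the corollary is exactly $\max_{1\le k<n}G_n^\gamma(k)$ with this $g$. It therefore suffices to (a) verify that every hypothesis of Theorem \ref{thm:A1_G} holds for this kernel, and (b) evaluate the constants $\sigma^2$ and $c_g$ that appear in the limiting process.

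For step (a): since $|g|\le\tfrac12$, the random variable $g(\xi_2-\xi_1)$ is bounded and hence has finite second moments. For the existence of $c_g=\lim_n\sqrt n\,u(\Delta_n)$, I would start from the identity $u(\Delta_n)=\int_{\R}(F(y)-F(y-\Delta_n))\,dF(y)$ already recorded in the text, and, using $\Delta_n=c/\sqrt n$, rewrite $\sqrt n\,u(\Delta_n)=c\int_{\R}\frac{F(y)-F(y-\Delta_n)}{\Delta_n}\,dF(y)$. Because $\xi_1$ has a bounded density $f$, the difference quotient is dominated by $\|f\|_\infty$ and converges to $f(y)$ for a.e.\ $y$, so dominated convergence gives $c_g=c\int_{\R}f(y)\,dF(y)=c\int_{\R}f^2(y)\,dy$. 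For the condition $\Var(h_1(\xi_1))\to 0$, I would use $h_1(x)=(F(x)-F(x-\Delta_n))-u(\Delta_n)$ together with the two bounds $|F(x)-F(x-\Delta_n)|\le\|f\|_\infty|\Delta_n|$ and $|u(\Delta_n)|\le\|f\|_\infty|\Delta_n|$ to conclude $\sup_x|h_1(x)|\le 2\|f\|_\infty|\Delta_n|\to 0$, whence $\Var(h_1(\xi_1))\le\E[h_1^2(\xi_1)]\to 0$.

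For step (b): I would compute $\sigma^2=\E[g_1^2(\xi_1)]$ directly. Since the bounded-density assumption makes $F$ continuous, $\E[g(\xi-x)]=\P(\xi\ge x)-\tfrac12=\tfrac12-F(x)$ and $\E[g(\xi-\eta)]=\P(\xi\ge\eta)-\tfrac12=0$, so $g_1(x)=\tfrac12-F(x)$ and $\sigma^2=\E\big[(\tfrac12-F(\xi_1))^2\big]=\Var(F(\xi_1))=\tfrac1{12}>0$, using that $F(\xi_1)$ is uniform on $[0,1]$. Substituting $\sigma=1/\sqrt{12}$ and $c_g=c\int_{\R}f^2(y)\,dy$ into the conclusion of Theorem \ref{thm:A1_G} yields the displayed limit.

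I do not expect a genuine obstacle here, since the work is all in Theorem \ref{thm:A1_G}. The only points needing real care are making the heuristic "$\approx$" steps of the pre-corollary discussion rigorous — namely the dominated-convergence argument identifying $c_g=c\int f^2$, and the uniform-in-$x$ estimate on $h_1$ — and both are precisely what boundedness of the density delivers; the continuity of $F$ it implies also takes care of the minor issue of non-continuity points in $\E[g(\xi-x)]=\tfrac12-F(x)$.
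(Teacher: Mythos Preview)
Your proposal is correct and follows essentially the same approach as the paper: specialize Theorem~\ref{thm:A1_G} to the Wilcoxon kernel, verify the moment and variance hypotheses, and compute $c_g$ and $\sigma^2$. If anything, you are more careful than the paper's pre-corollary discussion, since you replace its informal ``$\approx$'' steps by a dominated-convergence argument for $c_g$ and a uniform-in-$x$ bound $|h_1(x)|\le 2\|f\|_\infty|\Delta_n|$ for the variance condition, both relying on exactly the bounded-density assumption; the explicit computation $\sigma^2=1/12$ via $g_1(x)=\tfrac12-F(x)$ and uniformity of $F(\xi_1)$ is standard and only implicit in the paper.
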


\section{Early change with fixed height}

Now we consider Alternative $A_2$, i.e.\ the situation where the jump height is kept constant while the time of change moves closer to the border of the observation range. We will show that the choice of $\gamma$ influences the scales at which change points can be detected.

First, we consider the case $\gamma=0$, i.e.\ the case where the norming sequence is $\frac{1}{n^{3/2}}$ and so does not depend on $k$. For $\gamma=0$ we get $\kappa=1/2$, which yields the alternative where the change-point occurs at time $k_n^\ast \approx c \sqrt{n}$. 

\begin{theorem}\label{thm:A2_G0}
We consider model \eqref{model} under $A_2$. Assume that $g(\xi_2-\xi_1)$ has finite second moments. Moreover, assume that $\Var(h_1(\xi_1))<\infty$. Then, for $\gamma=0$ and as $n\rightarrow \infty$,
\begin{align*}
\max_{1\leq k < n} |G^0_n(k)|
 \claw  \sup_{0\leq \lambda \leq 1} \Big| \sigma W^{(0)}(\lambda) + c(1-\lambda) u(\Delta) \Big|,
\end{align*}
where $\sigma$ and $u(\Delta_n)$ and $h_1(\xi_1)$ are defined as in Theorem \ref{thm:A1_G}, albeit with $\Delta_n \equiv \Delta.$
\end{theorem}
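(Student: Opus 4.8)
The plan is to show that, for every $1\le k<n$, the statistic $G^0_n(k)$ decomposes into a part that does not involve the jump, a deterministic drift, and a remainder that is uniformly negligible. Writing $\xi_1,\dots,\xi_n$ for the underlying i.i.d.\ noise and using model \eqref{model}, I would treat the cases $k\le k_n^\ast$ and $k\ge k_n^\ast$ separately; inserting the identity $g(\xi_j-\xi_i+\Delta)=g(\xi_j-\xi_i)+h(\xi_i,\xi_j)$ with $h(x,y):=g(y-x+\Delta)-g(y-x)$ and counting the pairs $(i,j)$ in which $\Delta$ actually enters, one obtains in both cases
\begin{align*}
G^0_n(k)=U_n(k/n)+\frac{\phi_n(k)}{n^{3/2}}\,u(\Delta)+r_n(k),\qquad U_n(\lambda):=\frac{1}{n^{3/2}}\sum_{i=1}^{[n\lambda]}\sum_{j=[n\lambda]+1}^{n}g(\xi_j-\xi_i),
\end{align*}
where $\phi_n$ is the function from Section~2 and $r_n(k)$ collects the non-constant terms of Hoeffding's decomposition of $h$ over the pairs containing the jump.

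For the first summand I would use the known invariance principle for the unweighted two-sample antisymmetric $U$-process: Hoeffding's decomposition $g(\xi_j-\xi_i)=g_1(\xi_i)-g_1(\xi_j)+g_2(\xi_i,\xi_j)$ gives $U_n(\lambda)=S_n(\lambda)-\lambda S_n(1)+o_P(1)$ uniformly in $\lambda$, with $S_n(\lambda):=n^{-1/2}\sum_{i\le[n\lambda]}g_1(\xi_i)$, so that $U_n(\cdot)\cweak\sigma W^{(0)}(\cdot)$ in $D[0,1]$ by Donsker's theorem; here $\sigma^2=\E[g_1^2(\xi_1)]$ is finite because $\E[g(\xi_2-\xi_1)^2]<\infty$, and the degenerate remainder $n^{-3/2}\sum_i\sum_j g_2(\xi_i,\xi_j)$ is $o_P(1)$ uniformly in $k$ by a maximal inequality for the partial-sum process of a degenerate $U$-statistic. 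This is the $\gamma=0$ null-hypothesis limit, available in the literature (cf.\ \cite{CsH:1997}, \cite{S:1991}, \cite{DFGW:2015}). For the drift, since $k_n^\ast/\sqrt n\to c$ and $k_n^\ast/n\to0$, one has $\phi_n([n\lambda])/n^{3/2}\to c(1-\lambda)$ uniformly on every $[\delta,1]$, $\delta\in(0,1)$.

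The technical core is $\max_{1\le k<n}|r_n(k)|\cpr0$. Hoeffding's decomposition reads $h(x,y)=u(\Delta)+h_1(x)+\tilde h_1(y)+h_2(x,y)$, with $h_1$ as in Theorem~\ref{thm:A1_G} (now $\Delta_n\equiv\Delta$), $\tilde h_1(y)=\E[h(\xi_1,y)]-u(\Delta)$, and $h_2$ degenerate. Then, for $k\ge k_n^\ast$, $r_n(k)=\tfrac{n-k}{n^{3/2}}\sum_{i=1}^{k_n^\ast}h_1(\xi_i)+\tfrac{k_n^\ast}{n^{3/2}}\sum_{j=k+1}^{n}\tilde h_1(\xi_j)+n^{-3/2}\sum_{i=1}^{k_n^\ast}\sum_{j=k+1}^{n}h_2(\xi_i,\xi_j)$, with the obvious analogue for $k\le k_n^\ast$. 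Here the crucial fact is $k_n^\ast=O(\sqrt n)$: together with $\Var(h_1(\xi_1))<\infty$ and the finiteness of $\E[h(\xi_1,\xi_2)^2]$ (hence of $\Var(\tilde h_1(\xi_1))$ and $\Var(h_2(\xi_1,\xi_2))$; this holds for the CUSUM and Wilcoxon kernels), the first term is $O_P(n^{-1/4})$ since $\sum_{i=1}^{k_n^\ast}h_1(\xi_i)$ has variance $O(\sqrt n)$, the second is $O_P(n^{-1/2})$ since $\max_{0\le m\le n}\big|\sum_{j=m+1}^n\tilde h_1(\xi_j)\big|=O_P(\sqrt n)$ by a maximal inequality for partial sums, and for the third the full double sum has variance of order $k_n^\ast\cdot n=O(n^{3/2})$, so after a maximal inequality for the partial-sum process of a degenerate two-sample $U$-statistic it is $o_P(n^{3/2})$ and division by $n^{3/2}$ gives $o_P(1)$. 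I expect this degenerate-$U$-statistic maximal inequality, and the bookkeeping needed to make every bound uniform over all $k$ (in particular over the boundary range $1\le k\le k_n^\ast$), to be the main obstacle, though such estimates are standard in this setting (cf.\ \cite{S:1991}, \cite{DFGW:2015}).

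It remains to assemble the pieces. For $k>k_n^\ast$ one has $\phi_n(k)=k_n^\ast(n-k)$, so the drift equals $c(1-k/n)u(\Delta)$ up to an error that is $o(1)$ uniformly in $k$; hence $G^0_n(k)=\big(U_n+c(1-\cdot)u(\Delta)\big)(k/n)+\tilde r_n(k)$ with $\sup_{k>k_n^\ast}|\tilde r_n(k)|\cpr0$. Since $U_n(\cdot)\cweak\sigma W^{(0)}(\cdot)$ in $D[0,1]$, the process $U_n(\cdot)+c(1-\cdot)u(\Delta)$ converges to the a.s.\ continuous $Z(\cdot):=\sigma W^{(0)}(\cdot)+c(1-\cdot)u(\Delta)$, and by the continuous mapping theorem for the supremum functional — using the restriction to $[\delta,1]$ and $\delta\downarrow0$ to accommodate the vanishing left endpoint — $\max_{k_n^\ast<k<n}|G^0_n(k)|\cweak\sup_{0\le\lambda\le1}|Z(\lambda)|$. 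For the boundary range $1\le k\le k_n^\ast$ one has $\phi_n(k)=k(n-k_n^\ast)$, so $\phi_n(k)/n^{3/2}$ is increasing in $k$ with maximum $k_n^\ast(n-k_n^\ast)/n^{3/2}\to c$, while $\sup_{k\le k_n^\ast}|U_n(k/n)|\cpr0$ (as $k_n^\ast/n\to0$ and $W^{(0)}$ is continuous at $0$) and $\sup_{k\le k_n^\ast}|r_n(k)|\cpr0$; hence $\max_{1\le k\le k_n^\ast}|G^0_n(k)|\cpr|c\,u(\Delta)|=|Z(0)|$, a deterministic constant. Since $|Z(0)|\le\sup_{0\le\lambda\le1}|Z(\lambda)|$, Slutsky's theorem yields
\begin{align*}
\max_{1\le k<n}|G^0_n(k)|=\max\Big(\max_{k_n^\ast<k<n}|G^0_n(k)|,\ \max_{1\le k\le k_n^\ast}|G^0_n(k)|\Big)\claw\sup_{0\le\lambda\le1}\big|\sigma W^{(0)}(\lambda)+c(1-\lambda)u(\Delta)\big|,
\end{align*}
which is the assertion.
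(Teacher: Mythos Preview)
Your proposal is correct and rests on the same decomposition as the paper: writing $G_n^0(k)=I_n(k)+J_n(k)$ (your $U_n$ plus drift-plus-$r_n$), invoking the unweighted FCLT for $I_n$, and showing $\max_k|J_n(k)-\E J_n(k)|\cpr0$ via Hoeffding's decomposition of $h$. Your three bounds on the Hoeffding pieces of $r_n$ are exactly those of Lemma~\ref{lem:weighted_jn-conv_A2} in the paper; the ``degenerate-$U$-statistic maximal inequality'' you anticipate as an obstacle is simply Doob's inequality applied to the martingale $\big(\sum_{i\le k_n^\ast}\sum_{j>k}\psi(\xi_i,\xi_j)\big)_k$, so it is not a real difficulty.

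The only genuine difference is the final assembly. The paper does not split at $k_n^\ast$; instead it introduces an auxiliary scale $n/m$, defines intermediate quantities $Z_{n,m}$ and $Z_{(m)}$, and invokes Billingsley's triangle theorem (Theorem~3.2 in \cite{B:1999}) to pass from $Z_{n,m}$ to $Z_n$. Your split at $k_n^\ast$ with Slutsky is more elementary and perfectly adequate here; the one step that deserves a line more than your ``$[\delta,1]$ and $\delta\downarrow0$'' remark is why $\max_{k>k_n^\ast}|Y_n(k/n)|$ (with domain shrinking to $[0,1]$) has the same limit as $\sup_{[0,1]}|Y_n|$---this follows because $\sup_{[0,\lambda_n]}|Y_n|\cpr|cu(\Delta)|\le |Y_n(\lambda_n)|+o_P(1)$, so the boundary segment never beats the interior. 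The paper's triangle-theorem packaging sidesteps this explicitly and has the advantage of being reused unchanged for the weighted case (Theorem~\ref{thm:A2_G}); your direct argument is shorter for $\gamma=0$ but would need modification there.
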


For the CUSUM kernel we have $u(\Delta)=\Delta$ and $\Var(h_1(\xi_1))=0$. For the Wilcoxon kernel, we get $u(\Delta)=\P(0 \leq \xi_2-\xi_1\leq \Delta)$ and $\Var(h_1(\xi_1))\rightarrow 0$. Thus, we can deduce the following corollaries.
 
\begin{corollary}\label{cor:A2_cusum0}
Under the assumptions of Theorem \ref{thm:A2_G0} it holds
\[
 \max_{1\leq k < n}  \frac{1}{n^{3/2}} \Big| \sum_{i=1}^k \sum_{j=k+1}^n (X_j-X_i) \Big| \claw
 \sup_{0\leq \lambda \leq 1} | \sigma W^{(0)}(\lambda) + c(1-\lambda)\Delta |,
\]
where $\sigma^2=\Var(\xi_1)< \infty$.
\end{corollary}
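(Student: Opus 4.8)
The plan is to read off Corollary \ref{cor:A2_cusum0} directly from Theorem \ref{thm:A2_G0} by specializing to the CUSUM kernel $g(x)=x$, which is odd, so that the theorem applies. The first step is to check that the two hypotheses of Theorem \ref{thm:A2_G0} follow from the single assumption $\Var(\xi_1)<\infty$. Since $g(\xi_2-\xi_1)=\xi_2-\xi_1$, we have $\E[g(\xi_2-\xi_1)^2]=2\Var(\xi_1)<\infty$, so the finite-second-moment condition holds. For the condition $\Var(h_1(\xi_1))<\infty$, note that $h_1(x)=\E[(\xi-x+\Delta)-(\xi-x)]-u(\Delta)=\Delta-u(\Delta)$, and since $u(\Delta)=\E[(\xi-\eta+\Delta)-(\xi-\eta)]=\Delta$, we get $h_1\equiv 0$; hence $\Var(h_1(\xi_1))=0$, which is the most degenerate and therefore most harmless case. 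This is exactly what was already recorded in the discussion preceding the statement.

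The second step is to identify the constants appearing in the limiting distribution. Using that $\xi_1$ has mean zero, $g_1(x)=\E[\xi-x]-\E[\xi-\eta]=-x$, so $\sigma^2=\E[g_1^2(\xi_1)]=\E[\xi_1^2]=\Var(\xi_1)$, which is finite by assumption and strictly positive for non-degenerate noise. Together with $u(\Delta)=\Delta$ and the observation that for $g(x)=x$ the statistic reduces to $G^0_n(k)=n^{-3/2}\sum_{i=1}^k\sum_{j=k+1}^n(X_j-X_i)$, substituting these values into the conclusion of Theorem \ref{thm:A2_G0} gives precisely the asserted weak convergence $\max_{1\le k<n}|G^0_n(k)|\claw \sup_{0\le\lambda\le1}|\sigma W^{(0)}(\lambda)+c(1-\lambda)\Delta|$ with $\sigma^2=\Var(\xi_1)$.

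I do not expect any genuine obstacle here: all of the analytic work—the functional limit theorem for the weighted (here degenerate) $U$-process and the identification of the deterministic drift $c(1-\lambda)u(\Delta)$—is already contained in Theorem \ref{thm:A2_G0}. The only points that need a moment's attention are the repeated use of $\E[\xi_1]=0$ in evaluating $g_1$ and $u$, and the remark that the degeneracy $h_1\equiv 0$ makes the variance hypothesis trivially satisfied rather than creating difficulties.
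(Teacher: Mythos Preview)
Your proposal is correct and follows exactly the paper's approach: specialize Theorem~\ref{thm:A2_G0} to $g(x)=x$, observe that $u(\Delta)=\Delta$ and $h_1\equiv 0$ (so $\Var(h_1(\xi_1))=0$), and identify $\sigma^2=\Var(\xi_1)$. The paper records precisely these facts in the sentence preceding the corollary and in the analogous derivation before Corollary~\ref{cusum_thm:A1_G}; your write-up simply makes the verification of the moment hypotheses and the computation of $\sigma$ slightly more explicit.
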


\begin{corollary}\label{cor:A2_wilc0}
Under the assumptions of Theorem \ref{thm:A2_G0} it holds
\[
\max_{1\leq k < n} \frac{1}{n^{3/2}}  \Big| \sum_{i=1}^k \sum_{j=k+1}^n  \Big(1_{\{X_i\leq X_j\}}  -\frac{1}{2}\Big) \Big|  \claw \sup_{0\leq \lambda \leq 1} \Big| \frac{1}{12} W^{(0)}(\lambda) + c(1-\lambda) \P(0 \leq \xi_2-\xi_1\leq \Delta) \Big|.
\]
\end{corollary}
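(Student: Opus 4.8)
The plan is to obtain Corollary~\ref{cor:A2_wilc0} as a direct specialization of Theorem~\ref{thm:A2_G0} to the Wilcoxon kernel $g(x)=1_{\{0\le x\}}-\tfrac12$. This $g$ is odd away from the single point $x=0$, which is harmless because the noise distribution is (as throughout) continuous, and it turns $G^0_n(k)$ into $\frac{1}{n^{3/2}}\sum_{i=1}^k\sum_{j=k+1}^n\bigl(1_{\{X_i\le X_j\}}-\tfrac12\bigr)$. First I would verify that this kernel meets the hypotheses of Theorem~\ref{thm:A2_G0}. Since $|g|\le\tfrac12$, the random variable $g(\xi_2-\xi_1)$ is bounded and thus has finite second moment. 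Writing $F$ for the distribution function of $\xi_1$ and using the quantities carried over from Theorem~\ref{thm:A1_G} with $\Delta_n\equiv\Delta$, one computes $u(\Delta)=\E[g(\xi-\eta+\Delta)-g(\xi-\eta)]$ and $h_1(x)=\bigl(F(x)-F(x-\Delta)\bigr)-u(\Delta)$; since $0\le F(x)-F(x-\Delta)\le1$ and $0\le u(\Delta)\le1$, we have $|h_1|\le1$, hence $\Var(h_1(\xi_1))<\infty$, which is all that Theorem~\ref{thm:A2_G0} requires. (In contrast to the situation under $A_1$, here $\Delta$ is fixed, so there is no reason for $\Var(h_1(\xi_1))$ to vanish; finiteness is enough.)

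The second step is to evaluate the two constants appearing in the limiting process. For $\sigma^2=\E[g_1^2(\xi_1)]$ with $g_1(x)=\E[g(\xi-x)]-\E[g(\xi-\eta)]$, continuity of $F$ gives $\E[g(\xi-\eta)]=\P(\xi\ge\eta)-\tfrac12=0$ and $\E[g(\xi-x)]=\P(\xi\ge x)-\tfrac12=\tfrac12-F(x)$, so $g_1(\xi_1)=\tfrac12-F(\xi_1)$; by the probability integral transform $F(\xi_1)$ is uniformly distributed on $[0,1]$, whence $\sigma^2=\Var\bigl(F(\xi_1)\bigr)=\tfrac1{12}$. For the drift constant, $u(\Delta)=\P(\xi\ge\eta-\Delta)-\P(\xi\ge\eta)=\P(\eta-\Delta\le\xi\le\eta)=\P(0\le\eta-\xi\le\Delta)$, and since $\xi-\eta$ and $\eta-\xi$ have the same distribution, this equals $\P(0\le\xi_2-\xi_1\le\Delta)$.

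Substituting $\sigma^2=\tfrac1{12}$ and $u(\Delta)=\P(0\le\xi_2-\xi_1\le\Delta)$ into the conclusion of Theorem~\ref{thm:A2_G0} then gives the convergence claimed in the corollary. I do not anticipate any genuine obstacle here, since the analytic work is entirely done in Theorem~\ref{thm:A2_G0}; what remains is bookkeeping with Hoeffding's decomposition of the Wilcoxon kernel. The only points needing mild care are that the Wilcoxon kernel is discontinuous, so one should invoke continuity of $F$ both to treat $g$ as odd and to ignore the probability-zero events $\{\xi=x\}$, $\{\xi=\eta\}$ when computing the expectations above, and the symmetrization identity $\P(\eta-\Delta\le\xi\le\eta)=\P(0\le\xi_2-\xi_1\le\Delta)$, which relies on $\xi_1,\xi_2$ being independent and identically distributed.
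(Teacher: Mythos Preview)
Your proposal is correct and follows exactly the paper's approach: the paper derives both Corollaries~\ref{cor:A2_cusum0} and~\ref{cor:A2_wilc0} in a single sentence before their statements by computing $u(\Delta)$ and checking the condition on $h_1$ for each kernel, then invoking Theorem~\ref{thm:A2_G0}. One minor slip worth flagging: Theorem~\ref{thm:A2_G0} has $\sigma W^{(0)}$, not $\sigma^2 W^{(0)}$, so your computation $\sigma^2=1/12$ yields $\sigma=1/\sqrt{12}$ in the limit, and the $\tfrac{1}{12}$ appearing in the corollary's displayed formula is evidently a typo for $\tfrac{1}{\sqrt{12}}$ (compare Corollary~\ref{wilcoxon_thm:A1_G}).
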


\begin{remark}(i) When $c=0$, i.e.\ when $k_n^\ast/\sqrt{n}\rightarrow 0$, the distribution of the test statistic under the alternative is asymptotically the same as under the null hypothesis, and thus the test has no power to detect such alternatives.  The test has asymptotically only trivial power $\alpha$, the same as the size.
\\[1mm]
(ii) The test is consistent if and only if $\lim_{n\rightarrow \infty} \frac{k_n^\ast}{\sqrt{n}} =\infty$. 
\\[1mm]
(iii) In this sense, $k_n \approx c\sqrt{n}$ is the critical time for a change-point, when one wants to obtain a consistent test. Depending on the value $c>0$, the power might asymptotically approach any value between $\alpha$ (the size) and 1. This holds, as the distribution of $\sup|\sigma W^{(0)}(\lambda)|$ is continuous and for $c \rightarrow c'$ we have
\[ \sup_{0\leq \lambda \leq 1} \big| \sigma W^{(0)}(\lambda) + c(1-\lambda) u(\Delta) \big| \rightarrow \sup_{0\leq \lambda \leq 1} \big| \sigma W^{(0)}(\lambda) + c'(1-\lambda) u(\Delta) \big| \]
in $\mathcal{D}[0,1]$. Thus, for $c \rightarrow c',$
\[ \P_{(c)} \big(\max_{1\leq k < n} |G^0_n(k)| > q_{\alpha} \big) \rightarrow \P_{(c')} \big(\max_{1\leq k < n} |G^0_n(k)| > q_{\alpha} \big), \]
where $q_{\alpha}$ is the critical value depending on the asymptotical size $\alpha$. For $c=0$ we have $ \P_{(0)} \big(\max_{1\leq k < n} |G^0_n(k)| > q_{\alpha} \big) = \alpha$ and for $c$ large enough $\P_{(c)} \big(\max_{1\leq k < n} |G^0_n(k)| > q_{\alpha} \big) = 1 .$ As the mapping 
  \[ c \mapsto \P_{(c)} \big(\max_{1\leq k < n} |G^0_n(k)| > q_{\alpha} \big) \] is continuous, it takes any value between $\alpha$ and $1$.
\end{remark}

Now, we consider the case $\gamma\in (0,1/2)$, i.e.\ where the norming sequence depends on $k$. Under the alternative $A_2$, we determine the asymptotic distribution of the test statistic $\max_{1 \leq k \leq n} G^{\gamma}_n(k)$.

\begin{theorem}\label{thm:A2_G}
We consider model \eqref{model} under $A_2$. Assume that $g(\xi_2-\xi_1)$ has finite second moments. Then, for $0<\gamma<\frac{1}{2}$ and as $n\rightarrow \infty$,
\begin{align*}
\max_{1\leq k < n} |G^{\gamma}_n(k)|
 \claw  \max\left\{c^{1-\gamma}u(\Delta), \sup_{0\leq \lambda \leq 1}\frac{\sigma}{(\lambda(1-\lambda))^\gamma} \big| W^{(0)}(\lambda)\big|\right\},
\end{align*} 
where $\sigma$ and $u(\Delta)$ are defined as in Theorem \ref{thm:A1_G}, albeit with $\Delta_n \equiv \Delta.$ 
\end{theorem}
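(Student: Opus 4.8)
First I would decompose $G_n^{\gamma}(k)$ into a deterministic drift sharply peaked at $k=k_n^\ast$, the weighted $U$-statistic process that already governs the null behaviour, and a negligible remainder, and then argue that near the left boundary the drift dominates while away from it the process dominates, the two maxima decoupling because they concentrate on different scales of $k$. Writing $w_n(k)=n^{3/2}\big(\tfrac kn(1-\tfrac kn)\big)^{\gamma}$ and using, for $i<j$, that $g(X_j-X_i)=g(\xi_j-\xi_i)+1_{\{i\le k_n^\ast<j\}}\psi(\xi_i,\xi_j)$ with $\psi(x,y)=g(y-x+\Delta)-g(y-x)$, this gives
\[
G_n^{\gamma}(k)=\frac{\widetilde S_n(k)}{w_n(k)}+\frac{\Psi_n(k)}{w_n(k)},\qquad
\widetilde S_n(k)=\sum_{i=1}^{k}\sum_{j=k+1}^{n}g(\xi_j-\xi_i),\quad
\Psi_n(k)=\sum_{i\le\min(k,k_n^\ast)}\ \sum_{j>\max(k,k_n^\ast)}\psi(\xi_i,\xi_j),
\]
where the two index blocks defining $\Psi_n(k)$ are disjoint, so Hoeffding's decomposition of $\psi$ (with $\E\psi(\xi,\eta)=u(\Delta)$ and first projection $h_1$, as in the Remark after Theorem~\ref{thm:A1_G}) yields $\Psi_n(k)=\phi_n(k)u(\Delta)+(\text{linear projection terms})+(\text{degenerate bilinear term})$. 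Setting $D_n(k):=\phi_n(k)u(\Delta)/w_n(k)$ and using $k_n^\ast\sim cn^{\kappa}$, $n-k_n^\ast\sim n$ and the identity $\kappa(1-\gamma)=\tfrac12-\gamma$, one checks that $D_n$ increases on $\{1,\dots,k_n^\ast\}$ and decreases afterwards, that $D_n(k_n^\ast)\to c^{1-\gamma}u(\Delta)$, and that $D_n(1)$ as well as $D_n(\lfloor\lambda n\rfloor)$ for fixed $\lambda\in(0,1)$ tend to $0$; this scale matching is exactly what forces the choice of $\kappa$. (We may assume $u(\Delta)\ge0$, else $c^{1-\gamma}|u(\Delta)|$ appears.)

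Next I would fix an auxiliary sequence $a_n$ with $k_n^\ast/a_n\to0$ and $a_n/n\to0$ (e.g.\ $a_n=\lfloor(k_n^\ast n)^{1/2}\rfloor$) and split $\{1,\dots,n-1\}=B_n\cup M_n$ with $B_n=\{1,\dots,a_n\}$. On $B_n$ one has $k/n\le a_n/n\to0$; since $\gamma<\tfrac12$, H\'ajek--R\'enyi type maximal inequalities for the partial sums of the Hoeffding projections $g_1,h_1$ (and their second-argument analogues), together with maximal inequalities for the degenerate bilinear parts, give $\sup_{k\in B_n}|\widetilde S_n(k)/w_n(k)|=o_P(1)$ and $\sup_{k\in B_n}|\Psi_n(k)/w_n(k)-D_n(k)|=o_P(1)$ --- here too the exponent bookkeeping closes precisely because of the value of $\kappa$. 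Hence $\sup_{k\in B_n}G_n^{\gamma}(k)=D_n(k_n^\ast)+o_P(1)$ and $\sup_{k\in B_n}(-G_n^{\gamma}(k))=o_P(1)$, so $\max_{k\in B_n}|G_n^{\gamma}(k)|\cpr c^{1-\gamma}u(\Delta)$, a constant.

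On $M_n=\{a_n+1,\dots,n-1\}$ the same arithmetic, now exploiting that $D_n$ is past its peak and $D_n(a_n)\to0$, gives $\sup_{k\in M_n}|\Psi_n(k)/w_n(k)|=o_P(1)$. For the remaining process term I would invoke the classical invariance principle $\widetilde S_n(\lfloor n\,\cdot\,\rfloor)/n^{3/2}\cweak\sigma W^{(0)}(\cdot)$ in $D[0,1]$ (which yields the $c=0$ case of Theorem~\ref{thm:A1_G}), so that by the continuous mapping theorem $\sup_{\lfloor\delta n\rfloor\le k\le\lceil(1-\delta)n\rceil}|\widetilde S_n(k)/w_n(k)|\claw\sup_{\delta\le\lambda\le1-\delta}\sigma|W^{(0)}(\lambda)|/(\lambda(1-\lambda))^{\gamma}$ for each fixed $\delta\in(0,\tfrac12)$. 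Letting $\delta\to0$ while controlling the two end zones by the H\'ajek--R\'enyi bound $\sup_{1\le k\le\delta n}|\widetilde S_n(k)/w_n(k)|=O_P(\delta^{1/2-\gamma})$ (and its mirror image near $k=n$) and using the a.s.\ fact that $\sigma|W^{(0)}(\lambda)|/(\lambda(1-\lambda))^{\gamma}\to0$ as $\lambda\to0,1$, one gets $\max_{k\in M_n}|G_n^{\gamma}(k)|\claw\sup_{0\le\lambda\le1}\sigma|W^{(0)}(\lambda)|/(\lambda(1-\lambda))^{\gamma}$.

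Finally, $\max_{1\le k<n}|G_n^{\gamma}(k)|=\max\{\max_{k\in B_n}|G_n^{\gamma}(k)|,\max_{k\in M_n}|G_n^{\gamma}(k)|\}$; the first term converges in probability to the constant $c^{1-\gamma}u(\Delta)$ and the second in distribution, so the pair converges jointly and the continuous map $\max$ delivers the claim. The hard part will be the uniform control of the remainder terms: one needs maximal inequalities for the linear Hoeffding projections carrying the singular weight $w_n(k)^{-1}$, which is most delicate near the right endpoint $k\to n$ where $(\tfrac kn(1-\tfrac kn))^{-\gamma}$ blows up, and analogous maximal inequalities for the degenerate (second-order Hoeffding) bilinear forms, which are not martingales in $k$ and will need a separate dyadic argument. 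Everything else is bookkeeping --- but bookkeeping that hinges entirely on $\kappa(1-\gamma)=\tfrac12-\gamma$.
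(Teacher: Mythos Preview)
Your approach is essentially the same as the paper's: decompose $G_n^\gamma$ into the weighted null $U$-process (the paper's $I_n^\gamma$) plus a drift term (the paper's $J_n^\gamma$), show $J_n^\gamma$ concentrates near its mean, analyze the peak of $\E J_n^\gamma$ via the identity $\kappa(1-\gamma)=\tfrac12-\gamma$, and split the index range so that the drift dominates on the early block and the null process on the rest. The only packaging difference is that the paper splits at $n/m$ for fixed $m$ and then passes $m\to\infty$ via Billingsley's triangle theorem, whereas you use a single intermediate sequence $a_n$; both are equivalent here, and your $\delta\to0$ step on $M_n$ is itself a triangle argument in disguise. Two comments on the remainder control you flag as hard: (i) the degenerate bilinear part of $\Psi_n$ \emph{is} a martingale in $k$ on each of the ranges $k\le k_n^\ast$ and $k\ge k_n^\ast$ with respect to the filtration $\sigma(\xi_1,\dots,\xi_k,\xi_{k_n^\ast+1},\dots,\xi_n)$ (respectively its mirror), so Doob's inequality suffices and no dyadic argument is needed there; (ii) for the weighted null process $\widetilde S_n(k)/w_n(k)$ near the boundaries the paper bypasses any decomposition of $\widetilde S_n$ altogether and invokes the strong approximation of Cs\"org\H{o}--Szyszkowicz--Wang, which directly yields $\sup_{\lambda\le 1/m}|I_n^\gamma(\lfloor n\lambda\rfloor)-\sigma W^{(n)}(\lambda)/(\lambda(1-\lambda))^\gamma|\to0$ in probability.
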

For the special case of the CUSUM and Wilcoxon kernel we obtain the followiong corollaries.
\begin{corollary}\label{cor:A2_weighted_cusum} Under the assumptions of Theorem \ref{thm:A2_G} we obtain 
\[
   \max_{1\leq k < n}  \frac{1}{n^{3/2} \big( \frac{k}{n} (1-\frac{k}{n}\big)\big)^\gamma }  \Big| \sum_{i=1}^k \sum_{j=k+1}^n (X_j-X_i) \Big|
\claw \max\left(c^{1-\gamma}\Delta, \sup_{0\leq \lambda \leq 1}\frac{\sigma}{(\lambda(1-\lambda))^\gamma} |W^{(0)}(\lambda)|\right),
\]
where $\sigma^2=\Var(\xi_1)<\infty$.
\end{corollary}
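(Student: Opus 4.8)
The plan is to obtain this corollary as the special case of Theorem~\ref{thm:A2_G} corresponding to the CUSUM kernel $g(x)=x$, which is odd as required. First I would note that with this choice the statistic $G_n^\gamma(k)$ in \eqref{wgeneral} becomes precisely $n^{-3/2}\big(\tfrac{k}{n}(1-\tfrac{k}{n})\big)^{-\gamma}\sum_{i=1}^k\sum_{j=k+1}^n (X_j-X_i)$, so the left-hand side of the corollary is exactly $\max_{1\le k<n}|G_n^\gamma(k)|$. It therefore suffices to verify the hypothesis of Theorem~\ref{thm:A2_G} and to evaluate the two constants appearing in its limit.

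For the hypothesis we need $g(\xi_2-\xi_1)=\xi_2-\xi_1$ to have a finite second moment; since $\E[(\xi_2-\xi_1)^2]=2\Var(\xi_1)<\infty$, this is immediate from the assumption $\Var(\xi_1)<\infty$. (No control of $\Var(h_1(\xi_1))$ is needed here, in contrast to Theorem~\ref{thm:A2_G0}.) Next I would compute, using $\E[\xi-\eta]=0$,
\[
u(\Delta)=\E\big[(\xi-\eta+\Delta)-(\xi-\eta)\big]=\Delta,
\]
so the deterministic entry $c^{1-\gamma}u(\Delta)$ of the maximum equals $c^{1-\gamma}\Delta$; and
\[
g_1(x)=\E[\xi-x]-\E[\xi-\eta]=\E[\xi_1]-x,
\qquad
\sigma^2=\E\big[g_1^2(\xi_1)\big]=\E\big[(\E[\xi_1]-\xi_1)^2\big]=\Var(\xi_1),
\]
which is finite and, for non-degenerate $\xi_1$, strictly positive. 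Substituting $u(\Delta)=\Delta$ and $\sigma^2=\Var(\xi_1)$ into the conclusion of Theorem~\ref{thm:A2_G} yields exactly the stated convergence.

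There is essentially no obstacle: the argument is a substitution plus two one-line moment computations. The only point requiring any care is the verification of the finite-second-moment hypothesis, and that has already been reduced to the stated assumption $\Var(\xi_1)<\infty$.
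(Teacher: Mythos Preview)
Your proposal is correct and follows exactly the paper's approach: the corollary is stated without a separate proof, as an immediate specialization of Theorem~\ref{thm:A2_G} to the CUSUM kernel $g(x)=x$, with the identifications $u(\Delta)=\Delta$ and $\sigma^2=\Var(\xi_1)$ already carried out earlier in the paper (after Theorem~\ref{thm:A1_G} and again before Corollary~\ref{cor:A2_cusum0}). Your observation that Theorem~\ref{thm:A2_G} does not require a hypothesis on $\Var(h_1(\xi_1))$ is also accurate.
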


\begin{corollary}\label{cor:A2_weighted_wilc} Under the assumptions of Theorem \ref{thm:A2_G} we obtain 
\begin{multline*}
   \max_{1\leq k < n} \frac{1}{n^{3/2} \big( \frac{k}{n} (1-\frac{k}{n}\big)\big)^\gamma } \Big| \sum_{i=1}^k \sum_{j=k+1}^n  \Big(1_{\{X_i\leq X_j\}}  -\frac{1}{2}\Big) \\\
\claw \max\left(c^{1-\gamma}\P(0 \leq \xi_2-\xi_1\leq \Delta) , \sup_{0\leq \lambda \leq 1} \frac{1}{12}\frac{1}{(\lambda(1-\lambda))^\gamma} |W^{(0)}(\lambda)|\right).
\end{multline*}
\end{corollary}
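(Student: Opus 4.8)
The plan is to obtain Corollary~\ref{cor:A2_weighted_wilc} as a direct specialization of Theorem~\ref{thm:A2_G} to the Wilcoxon kernel $g(x)=1_{\{0\le x\}}-\tfrac12$. Since that theorem already delivers the limiting law for an arbitrary odd $g$ with $g(\xi_2-\xi_1)\in L^2$, the only work is to check its hypotheses for this particular $g$ and to evaluate the two kernel-dependent constants $u(\Delta)$ and $\sigma^2=\E[g_1^2(\xi_1)]$.

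Checking the hypotheses is immediate: $|g|\le\tfrac12$ is bounded, so $g(\xi_2-\xi_1)$ has finite second moments, which is the only moment condition imposed in Theorem~\ref{thm:A2_G} (note that, in contrast to Theorem~\ref{thm:A2_G0}, no bound on $\Var(h_1(\xi_1))$ is needed here). As in Section~\ref{sec:A1}, I would work under the tacit assumption that $\xi_1$ has a continuous distribution, so that ties among the $\xi_i$ occur with probability zero; up to that null event $g$ is odd, and the value of $g$ at the origin plays no role.

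Evaluating the constants is a short computation. For $\Delta>0$, $g(\xi-\eta+\Delta)-g(\xi-\eta)=1_{\{\eta-\Delta\le\xi\}}-1_{\{\eta\le\xi\}}=1_{\{\eta-\Delta\le\xi<\eta\}}$, so taking expectations, $u(\Delta)=\P(\eta-\Delta\le\xi<\eta)=\P(0\le\xi_2-\xi_1\le\Delta)$, where the last equality uses continuity and exchangeability of $\xi,\eta$ (the case $\Delta<0$ being symmetric). For the variance, $\E[g(\xi-\eta)]=\P(\xi\ge\eta)-\tfrac12=0$ and $\E[g(\xi-x)]=\P(\xi\ge x)-\tfrac12=\tfrac12-F(x)$, hence $g_1(x)=\tfrac12-F(x)$; since $F(\xi_1)$ is uniformly distributed on $[0,1]$, $\sigma^2=\E\big[\big(\tfrac12-F(\xi_1)\big)^2\big]=\tfrac1{12}$. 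Substituting these values of $u(\Delta)$ and $\sigma$ into the limiting random variable of Theorem~\ref{thm:A2_G} yields the asserted weak convergence.

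I do not anticipate any genuine obstacle, since the corollary is an immediate plug-in once $u(\Delta)$ and $\sigma$ are identified. The single point that deserves a word is that the Wilcoxon kernel is not literally an odd function at $x=0$; this is handled exactly as for the corresponding alternative-$A_1$ corollary, by passing to a continuous noise distribution so that the defect sits on a null set.
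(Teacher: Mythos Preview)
Your approach is correct and coincides with the paper's: the corollary is not given a separate proof but is treated as an immediate specialization of Theorem~\ref{thm:A2_G}, with the identifications $u(\Delta)=\P(0\le\xi_2-\xi_1\le\Delta)$ and $\sigma^2=\E[g_1^2(\xi_1)]=1/12$ already recorded (just before Corollary~\ref{cor:A2_cusum0} and in Section~\ref{sec:A1}, respectively).

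One small point worth flagging: your computation gives $\sigma=1/\sqrt{12}$, whereas the stated corollary displays the coefficient $1/12$; the same discrepancy appears in Corollary~\ref{cor:A2_wilc0}, while Corollary~\ref{wilcoxon_thm:A1_G} has the matching $1/\sqrt{12}$. So your substitution does not quite ``yield the asserted weak convergence'' verbatim---it yields it with $1/\sqrt{12}$ in place of $1/12$, which is what Theorem~\ref{thm:A2_G} actually predicts. This is a typographical inconsistency in the paper rather than a gap in your argument.
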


In the next Theorem we identify conditions on the limit behavior of $k_n^*/n^{\kappa}$ that guarantee consistency of the test statistic $\max_{1 \leq k < n} |G_n^{\gamma}(k)|$. We will see that the special form of the limit distribution under the local alternative results in a peculiar behavior of the asymptotic power. 

\begin{theorem}\label{theo4} 
The change point test with test statistic 
\[
  \max_{1\leq k< n} |G_n^{\gamma}(k)|
\]
is consistent if $\liminf_{n\rightarrow \infty}k_n^\ast/n^\kappa >(q_\alpha/u(\Delta))^{1/(1-\gamma)}$, there $q_\alpha$ is the critical value depending on the asymptotical size $\alpha$. In contrast, the test has asymptotically only trivial power $\alpha$ if $\limsup_{n\rightarrow \infty}k_n^\ast/n^\kappa <(q_\alpha/u(\Delta))^{1/(1-\gamma)}$.
\label{th:consistent}
\end{theorem}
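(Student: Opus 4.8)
The plan is to split the statistic into a deterministic drift and a centred fluctuation, to show that the drift is maximised at the true change-point with value $(k_n^\ast/n^\kappa)^{1-\gamma}u(\Delta)(1+o(1))$, and then to read off both halves of the theorem from this — controlling the fluctuation pointwise in the consistency part, and via Theorem \ref{thm:A2_G} together with a subsequence argument in the triviality part. Write $G_n^\gamma(k)=D_n^\gamma(k)+R_n^\gamma(k)$, where under model \eqref{model}
\[
  D_n^\gamma(k):=\E\big[G_n^\gamma(k)\big]=\frac{\phi_n(k)\,u(\Delta)}{n^{3/2}\big(\tfrac{k}{n}(1-\tfrac{k}{n})\big)^\gamma},
\]
using that $\E[g(X_j-X_i)]$ equals $u(\Delta)$ when $i\le k_n^\ast<j$ and $0$ otherwise, and $R_n^\gamma(k):=G_n^\gamma(k)-D_n^\gamma(k)$ has mean zero. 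Assuming, as we may, that $u(\Delta)>0$, the map $k\mapsto D_n^\gamma(k)$ increases on $\{1,\dots,k_n^\ast\}$ and decreases on $\{k_n^\ast,\dots,n-1\}$, so $\max_{1\le k<n}D_n^\gamma(k)=D_n^\gamma(k_n^\ast)$; and since $k_n^\ast/n\to0$ and $\tfrac12-\gamma=\kappa(1-\gamma)$, an elementary computation gives $D_n^\gamma(k_n^\ast)=(k_n^\ast/n^\kappa)^{1-\gamma}u(\Delta)(1+o(1))$, which is exactly the quantity behind the threshold $\beta:=(q_\alpha/u(\Delta))^{1/(1-\gamma)}$.

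For consistency I would use only $\max_{1\le k<n}|G_n^\gamma(k)|\ge|G_n^\gamma(k_n^\ast)|\ge D_n^\gamma(k_n^\ast)-|R_n^\gamma(k_n^\ast)|$. If $\liminf_n k_n^\ast/n^\kappa>\beta$, then $k_n^\ast\to\infty$ and $D_n^\gamma(k_n^\ast)\ge q_\alpha+\delta$ for all large $n$, for some fixed $\delta>0$, so it suffices to show $|R_n^\gamma(k_n^\ast)|/D_n^\gamma(k_n^\ast)\cpr0$. Now $R_n^\gamma(k_n^\ast)$ is the normalised double sum of the centred kernel $\tilde g(x,y):=g(y-x+\Delta)-u(\Delta)$ over $\{1,\dots,k_n^\ast\}\times\{k_n^\ast+1,\dots,n\}$; Hoeffding's decomposition of $\tilde g$ (its two one-dimensional projections plus a $2$-degenerate remainder, all with finite variance by the moment assumption) shows the variance of this double sum is $O\big(n^2k_n^\ast\big)$, the linear parts dominating. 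Dividing by $n^3\big(\tfrac{k_n^\ast}{n}(1-\tfrac{k_n^\ast}{n})\big)^{2\gamma}$ and using $\tfrac12-\gamma=\kappa(1-\gamma)$ once more yields $\Var\big(R_n^\gamma(k_n^\ast)\big)=O\big((k_n^\ast/n^\kappa)^{1-2\gamma}n^{-\kappa}\big)=O\big(D_n^\gamma(k_n^\ast)^2/k_n^\ast\big)$, so by Chebyshev $|R_n^\gamma(k_n^\ast)|/D_n^\gamma(k_n^\ast)\cpr0$ and hence $\P\big(\max_{1\le k<n}|G_n^\gamma(k)|>q_\alpha\big)\to1$.

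For the triviality part, assume $\limsup_n k_n^\ast/n^\kappa<\beta$; since the conclusion is about a real sequence, I would argue along subsequences. From any subsequence extract a further one along which $k_n^\ast/n^\kappa\to c'$ with $c'\in[0,\beta)$. If $c'>0$, Theorem \ref{thm:A2_G} applied to this subsequence gives $\max_{1\le k<n}|G_n^\gamma(k)|\claw\max\{(c')^{1-\gamma}u(\Delta),N\}$, where $N:=\sup_{0\le\lambda\le1}\tfrac{\sigma}{(\lambda(1-\lambda))^\gamma}|W^{(0)}(\lambda)|$; if $c'=0$ the drift term in that proof is even smaller and the same convergence holds with $N$ in place of the maximum. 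In either case $(c')^{1-\gamma}u(\Delta)<\beta^{1-\gamma}u(\Delta)=q_\alpha$, so $\{\max\{(c')^{1-\gamma}u(\Delta),N\}>q_\alpha\}=\{N>q_\alpha\}$, and because $N$ has a continuous distribution whose $(1-\alpha)$-quantile is $q_\alpha$ by construction, the portmanteau theorem yields $\P\big(\max_{1\le k<n}|G_n^\gamma(k)|>q_\alpha\big)\to\P(N>q_\alpha)=\alpha$ along the subsequence; the subsequence principle then gives convergence to $\alpha$ for the whole sequence.

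I expect the principal difficulty to lie in the variance bookkeeping of the consistency step: one must verify that the powers of $n$ and of $k_n^\ast/n^\kappa$ interlock — via $\kappa=\tfrac{1-2\gamma}{2(1-\gamma)}$ — so that the fluctuation at $k_n^\ast$ is negligible \emph{relative to} the drift there for \emph{every} rate at which $k_n^\ast/n^\kappa$ may grow, not merely when it stays bounded. A lesser point is that the regime $k_n^\ast/n^\kappa\to0$ is not literally covered by Theorem \ref{thm:A2_G}, so one needs the routine remark that a vanishing drift only simplifies its proof, plus the standard upgrade from subsequential limits to the limit of the whole sequence.
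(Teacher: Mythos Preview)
Your proposal is correct. The triviality half matches the paper exactly: both extract, from an arbitrary subsequence, a further subsequence along which $k_n^\ast/n^\kappa\to c'<\beta$, invoke Theorem~\ref{thm:A2_G}, and read off that the limit exceeds $q_\alpha$ with probability exactly $\alpha$.

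For consistency, however, you and the paper part ways. The paper reuses the same device---a subsubsequence with $k_{n_j}^\ast/n_j^\kappa\to c\in(\beta,\infty)$ and another appeal to Theorem~\ref{thm:A2_G}---whereas you argue directly at the single index $k=k_n^\ast$: compute the drift $D_n^\gamma(k_n^\ast)=(k_n^\ast/n^\kappa)^{1-\gamma}u(\Delta)(1+o(1))$, bound the fluctuation there by Hoeffding plus Chebyshev, and conclude. Your route is more elementary (only a pointwise variance bound, no uniform control over $k$, no second use of Theorem~\ref{thm:A2_G}) and it covers cleanly the case where $k_n^\ast/n^\kappa\to\infty$ along a subsequence; the paper's claim that one can always find a subsubsequence with \emph{finite} limit $c$ is not available in that regime, so your argument actually closes a small gap there. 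The paper's approach, in return, is methodologically uniform and requires no new computation beyond the already-established Theorem~\ref{thm:A2_G}. Your side remarks---that $c'=0$ is not literally covered by Theorem~\ref{thm:A2_G} and that continuity of the law of $N$ at $q_\alpha$ is needed for portmanteau---are well taken; the paper leaves both implicit.
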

\begin{proof} First note that in order to achieve asymptotic size $\alpha$, we have to choose $q_\alpha$ such that
\begin{equation*}
\P\left(\sup_{0\leq \lambda \leq 1}\frac{1}{(\lambda(1-\lambda))^\gamma} |W^{(0)}(\lambda)|>q_\alpha\right)=\alpha.
\end{equation*}
We will show that for any subseries, where exists a subsubseries $(n_j)_{j\in\N}$, such that the probabilities for $\max_{1\leq k < n}|G^{\gamma}_{n_j}(k)|>q_\alpha$ converge to 1 respectively to $\alpha$. Because the limit is the same for any subsubseries, we will then conclude that the probabilities for $\max_{1\leq k < n}|G_{n}^{\gamma}(k)|>q_\alpha$ converge to 1 respectively to $\alpha$. If $\liminf_{n\rightarrow \infty}k_n^\ast/n^\kappa >(q_\alpha/u(\Delta))^{1/(1-\gamma)}$, we can choose the subsubseries such that $k_{n_j}^\ast\approx c\cdot {n_j}^\kappa$ with $c>(q_\alpha/u(\Delta))^{1/(1-\gamma)},~ c<\infty$, so $c^{1-\gamma}u(\Delta)>q_\alpha$. So from Theorem \ref{thm:A2_G}, we know that the limit distribution of our test statistic is given by the distribution of
\begin{equation*}
\max\left(c^{1-\gamma}u(\Delta), \sup_{0\leq \lambda \leq 1}\frac{1}{(\lambda(1-\lambda))^\gamma} |W^{(0)}(\lambda)|\right),
\end{equation*}
which exceeds the critical value $q_{\alpha}$ with probability 1.

To prove the other case, note that if $\limsup_{n\rightarrow \infty}k_n^\ast/n^\kappa <(q_\alpha/u(\Delta))^{1/(1-\gamma)}$, there exists a subsubseries with $k_{n_j}^\ast\approx c\cdot {n_j}^\kappa$ for a $c<(q_\alpha/u(\Delta))^{1/(1-\gamma)}$, so $c^{1-\gamma}u(\Delta)<q_\alpha$ and for the limit distribution, it holds that
\begin{equation*}
\P\left(\max\left(c^{1-\gamma}u(\Delta), \sup_{0\leq \lambda \leq 1}\frac{1}{(\lambda(1-\lambda))^\gamma} |W^{(0)}(\lambda)|\right)>q_\alpha\right)=\alpha.
\end{equation*}
\end{proof}

\begin{remark}
It is interesting to note that for $\gamma\in(0,1/2)$, the asymptotic power is either $\alpha$ or 1, unlike in the case $\gamma=0$, where the asymptotic power can take any value in the interval $(\alpha,1)$. 
\end{remark}

\subsection{Envelope power function} 
In this section, we calculate the envelope power function for the change point problem with normal data. We determine the test that maximizes the power in any point  $(k,\Delta)$, $1\leq k\leq n-1$, $ \Delta\in \R$ in the alternative. For simplicity, we focus on the case when $\Delta>0$, and we assume that the variance is known. By the Neyman-Pearson fundamental lemma, the most powerful level $\alpha$ test for the hypothesis of no change against the alternative of a change of size $\Delta$ at time $k$ rejects the hypothesis for large values of
\[
  T_k:= \frac{1}{\sqrt{\sigma^2\left(\frac{1}{k}+\frac{1}{n-k}\right)} }  
  \left(\frac{1}{n-k}\sum_{i=k+1}^n X_i - \frac{1}{k}\sum_{i=1}^{k} X_i \right),
\]
specifically when $T_k\geq z_{1-\alpha}$, where $z_{1-\alpha}$ is the $(1-\alpha)$ quantile of the standard normal distribution.  Under the alternative $(k,\Delta)$, the test statistic $T_k$ has a normal distribution with mean $\Delta\sqrt{\frac{k(n-k)}{n \sigma^2}}$ and variance $1$. Hence, the power is given by
\[
  \P_{(k,\Delta)}(T_k\geq z_{1-\alpha})=1-\Phi\left(z_{1-\alpha}- \Delta\sqrt{\frac{k(n-k)}{n \sigma^2}}\right),
\]
where $\Phi$ denotes the standard normal density function. 
This function defines the envelope power function, i.e.\ the maximal power that can be attained by any level $\alpha$ test for the hypothesis of stationarity.

\section{Simulation study}
In this section we compare the power of the CUSUM test statistic for different values of $\gamma$ via simulations. First, we consider the setting under the first alternative $A_1$ and generate $n=1000$ independent, standard normally distributed observations with one change-point, occurring after some fraction $\tau \in (0,1)$ of time. We consider three different jump heights, namely $\Delta=\frac{5}{\sqrt{n}},~ \frac{7}{\sqrt{n}},~\frac{9}{\sqrt{n}}.$ In Figure \ref{fig_envelope_power} the size corrected power functions, together with the appropriate envelope power function, are plotted.      
\begin{figure}[h]
\resizebox{\linewidth}{!}{\input{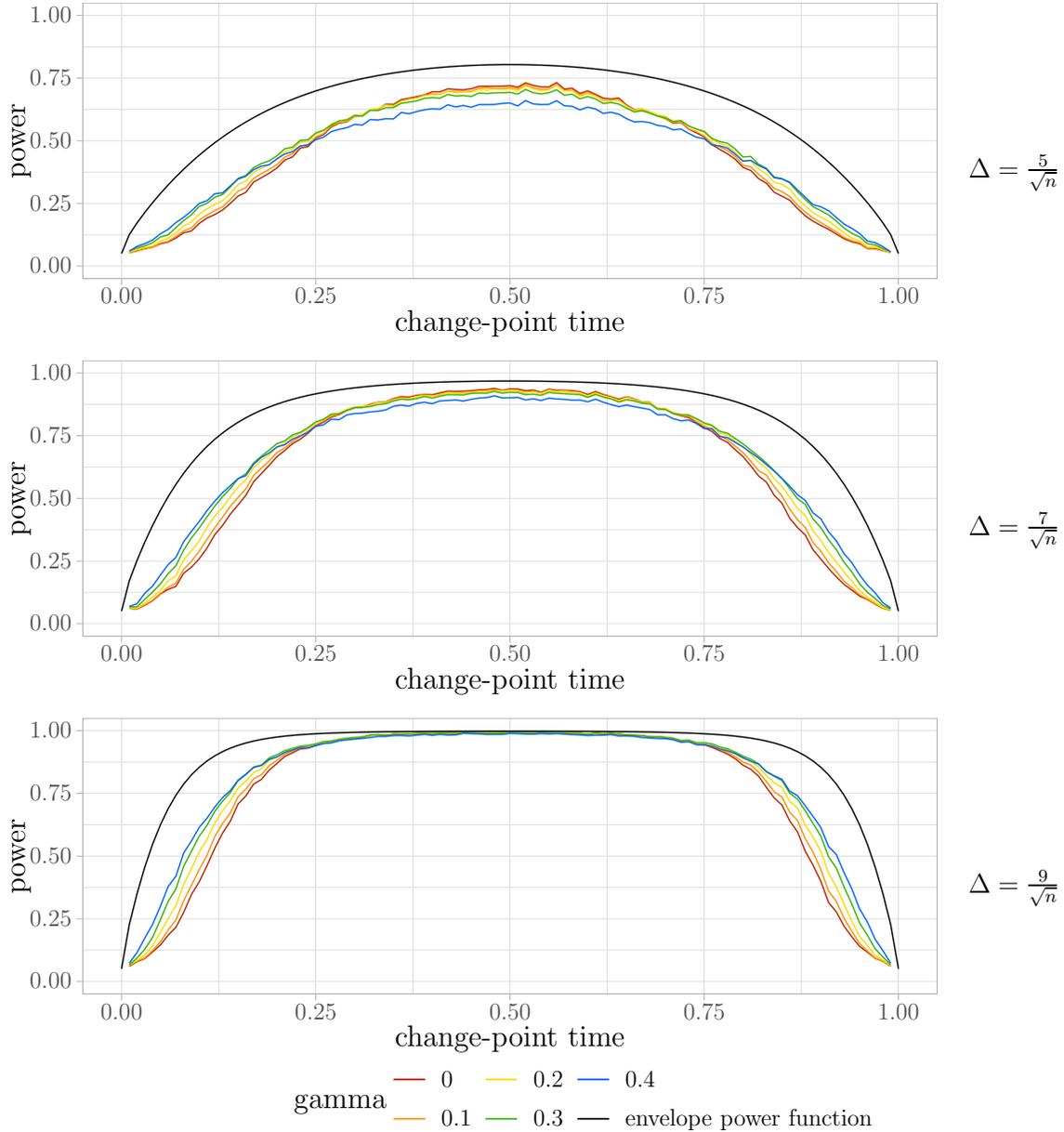}}
\caption{Size corrected power for the CUSUM test statistics and the envelope power function. The simulations are based on $n=1000$ standard normally distributed observations and 5000 runs.}
\label{fig_envelope_power}
\end{figure}
Obviously, for change-points which occur at the beginning or at the end, the power is higher the greater $\gamma$ is. If the change-point occurs around the middle of the time period, we get higher power for smaller $\gamma$. Regarding the jump heights, it is clear that the power improves for bigger jumps at each point in time. The difference in power for different $\gamma$ becomes less for higher jumps and changes that occur in the middle of the time period. For $\Delta=\frac{9}{\sqrt{n}}$ and for a change in the middle, the power is almost equal for all $\gamma$. Unlike for change-points that occur near the boundary of the time interval. In this case the difference in power for different values of $\gamma$ gets slightly greater for higher jumps.

In Figure \ref{fig_diff_power} the plots show the difference between the power of the most powerful level $\alpha=0.05$ test and the power of the weighted CUSUM tests. For higher jumps the difference is getting bigger at the boundary and smaller in the middle of the time period. For $\Delta=\frac{9}{\sqrt{n}}$ and changes in the middle, the differences in power are almost zero, i.e.\ the CUSUM test almost reaches the empirical power for all $\gamma$. For example, if we look at the middle plot in Figure \ref{fig_diff_power}, we see that for a change in the middle, we lose the most power with $\gamma=0.4$. For a change at the border, we lose the most power for $\gamma=0$.

A comparison of the overall-power is summarized in Table \ref{table.overall.power}. We have determined how much power (in $\%$) we get with the CUSUM tests, compared to the most powerful level $\alpha=0.05$ test. This basically means that we have considered the area under the curves in Figure \ref{fig_envelope_power}, where we have assumed that the area under the black curve (the envelope power function) corresponds to $100\%$ power. As before, we have considered different jump heights $\Delta=\frac{5}{\sqrt{n}},~\frac{7}{\sqrt{n}},~\frac{9}{\sqrt{n}}$. As an example, let us look at the overall-power for $\Delta=\frac{5}{\sqrt{n}}.$ The most powerful test yields $100\%$ power, and the CUSUM test with $\gamma=0.2$ yields $74.13\%$ power, which is slightly more compared to all other $\gamma$. For $\Delta=\frac{7}{\sqrt{n}}$ and $\Delta=\frac{9}{\sqrt{n}}$ the CUSUM test with $\gamma=0.4$ yields the highest overall-power.

\begin{figure}[]

\resizebox{\linewidth}{!}{\input{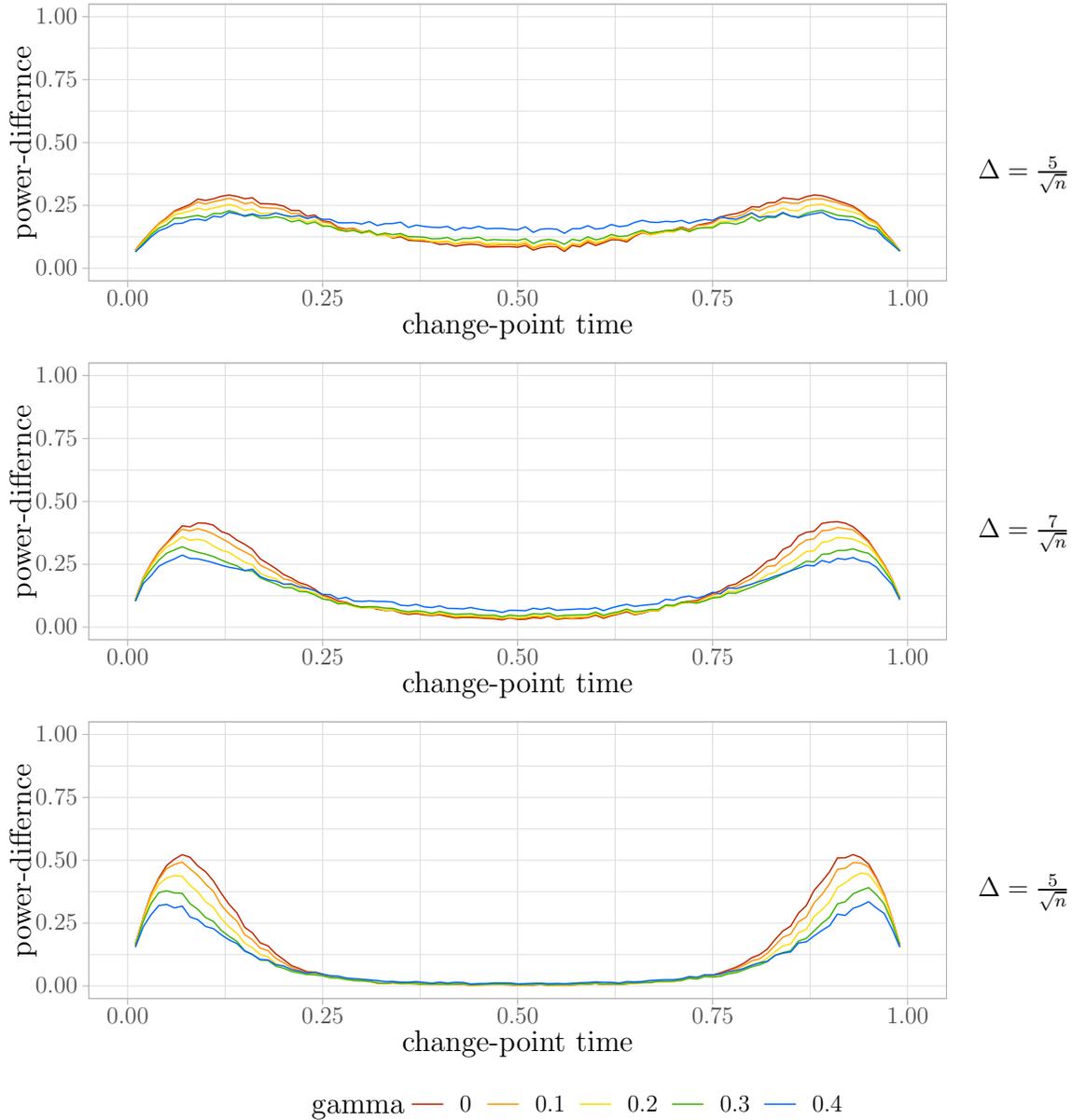}}

\caption{Power-difference: CUSUM test statistics compared to the envelope power. The simulations are based on $n=1000$ standard normally distributed observations and 5000 runs.}
\label{fig_diff_power}
\end{figure}

\vspace{0.5em}
\begin{table}[h]
\begin{tabular}{|c|ccccc|}
\hline
\diagbox{$\Delta_n$}{$\gamma$}
  						& 0          & 0.1       & 0.2 	     & 0.3	       & 0.4   \\					
 \hline \rule{0pt}{1.2\normalbaselineskip} 

 $  \frac{5}{\sqrt{n}}$    & $72.30\%$  & $72.71\%$ & $74.13\%$  & $74.75\%$   & $71.53\%$	 \\[2mm]
 $  \frac{7}{\sqrt{n}}$    & $78.97\%$  & $79.86\%$ & $81.45\%$  &	$82.65\%$  &	$81.22\%$ \\[2mm]
 $  \frac{9}{\sqrt{n}}$    & $83.98\%$  & $85.14\%$ & $86.87\%$  &	$88.52\%$  &	$89.22\%$ \\[2mm]  \hline
\end{tabular}
\vspace{2mm}
\caption[Table]{Overall-power compared to the envelope power for different values of the parameter $\gamma$ and different shift heights $\Delta_n$. The simulations are based on $n=1000$ independent, standard normally distributed observations and 5000 runs.} 
\label{table.overall.power} 
\end{table}

In Figure \ref{plot_A2} we have simulated the situation under alternative $A_2$, i.e.\ with $k_n^*\approx cn^{\kappa}$ and $\Delta_n \equiv \Delta$. The simulations are based on $n=5000$ (first plot) and $n=20000$ (second plot) standard normally distributed observations with a fixed shift height $\Delta=1$ at time $k_n^*=[ c n^{2/7} ], ~ 0<c<4.3866$. I.e.\ we consider jumps which occur very early, namely after $k_n^*=1,2,\dots, 50$ observations in the smaller sample with $n=5000$ and after $k_n^*=1,2,\dots,74$ in the larger sample $n=20000$. That's within the first $1\%$ and $0.37\%$ of the data, respectively. We compare the power functions for the weighted CUSUM test for different values of $\gamma$. In our model, the chosen $\kappa=2/7$ corresponds to $\gamma=0.3$. For smaller $\gamma$, the power converges to the level $\alpha=0.5$ and for a greater $\gamma$, the power converges to $1$.

%
%

\begin{figure}[]
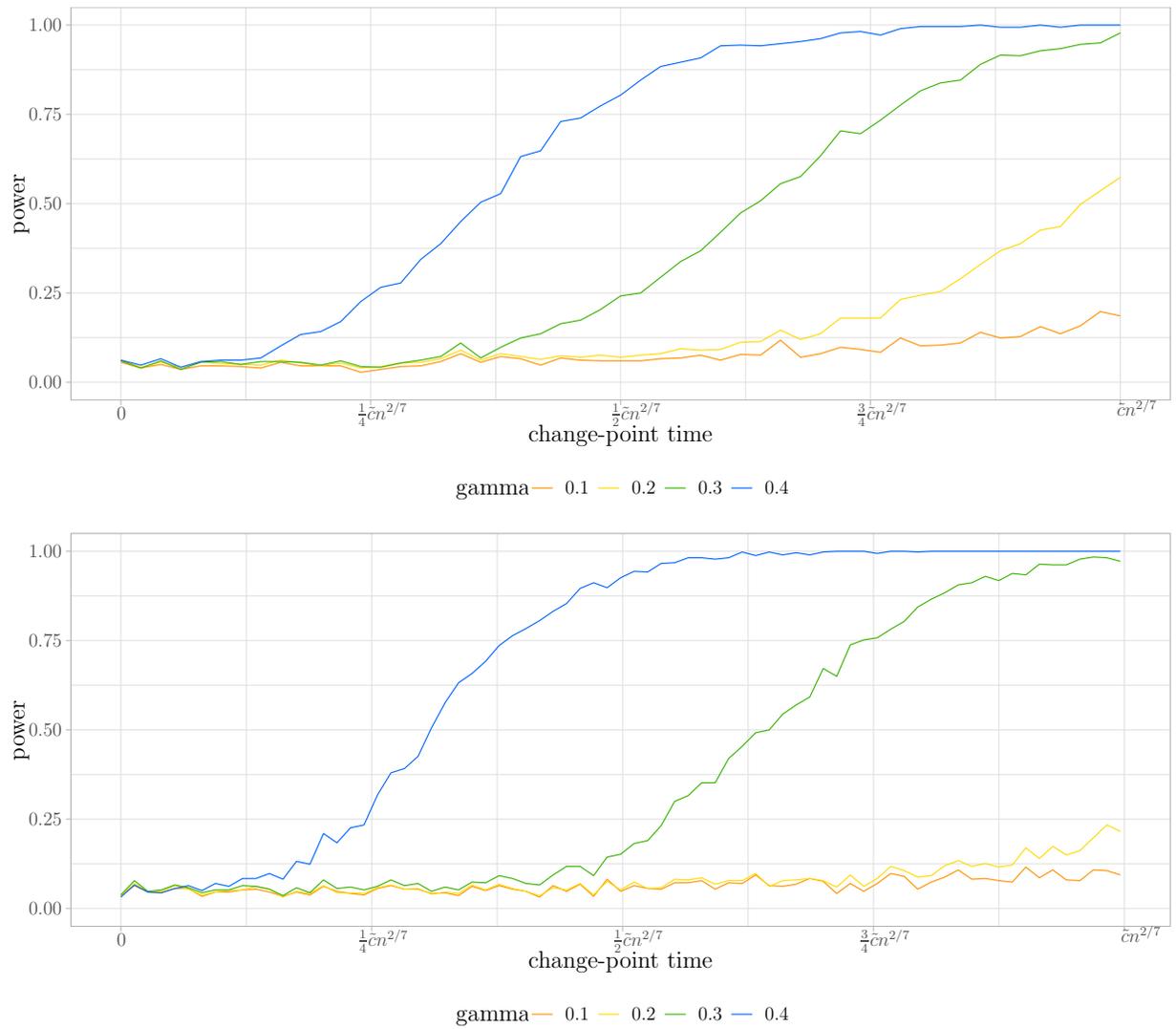

\resizebox{\linewidth}{!}{\input{Plots/plot_A2_5000.tex}}
\resizebox{\linewidth}{!}{\input{Plots/plot_A2_Kopie.tex}}
\caption{Size corrected power for the weighted CUSUM tests for $n=5000$ (top) and $n=20000$ (bottom) standard normally distributed observations with a change of size $\Delta=1$ at time $k_n^*=[c n^{2/7}]$, where $c=\tau \tilde{c}= \tau \frac{50}{5000^{2/7}},~ 0 \leq \tau \leq 1$. The simulations are based on 500 runs.  }
\label{plot_A2}
\end{figure}

%


\clearpage

\section{Proofs} \label{sec:proofs}

\subsection{Proof of Theorem \ref{thm:A1_G}}
 \begin{proof}[\unskip\nopunct]
 We recall some definitions and assumptions. We assume that $(\xi_i)_{i\geq 1}$ is an i.i.d.\ process, and that the observations are given by 
\begin{equation}
  X_i=\left\{
    \begin{array}{ll}
      \mu+ \xi_i & \mbox{ for } i\leq k_n^\ast \\[2mm]
      \mu+\Delta_n+\xi_i & \mbox{ for } i\geq k_n^\ast+1,   
    \end{array}
  \right.
 \label{eq:x-process}
\end{equation}
where $\mu$ is an unknown constant, and where $k_n^\ast=[n\tau^\ast]$, for some $\tau^\ast \in [0,1]$, and $\Delta_n=\frac{c}{\sqrt{n}}$. We consider a kernel of the type $g(y-x)$, where $g$ is an odd function, i.e.\ $g(-x)=-g(x)$. We consider the process 
\[
  G_n^{\gamma}(k)=\frac{1}{\big(\frac{k}{n} \big(1-\frac{k}{n}\big)  \big)^\gamma} \frac{1}{n^{3/2}} \sum_{i=1}^k \sum_{j=k+1}^n g(X_j-X_i). 
\]
By \eqref{eq:x-process}, we obtain the following decomposition 
\begin{align*}
  G_n^{\gamma}(k)&=\frac{1}{\big(\frac{k}{n} \big(1-\frac{k}{n}\big)  \big)^\gamma} \frac{1}{n^{3/2}} \sum_{i=1}^k \sum_{j=k+1}^n g(\xi_j-\xi_i) \\
  &\quad + \frac{1}{\big(\frac{k}{n} \big(1-\frac{k}{n}\big)  \big)^\gamma} \frac{1}{n^{3/2}} \sum_{i=1}^k \sum_{j=k+1}^n \big( g(X_j -X_i) - g(\xi_j-\xi_i) \big) \\
  &= \frac{1}{\big(\frac{k}{n} \big(1-\frac{k}{n}\big)  \big)^\gamma} \big( I_n(k) +J_n(k) \big),
\end{align*}
where the processes $I_n(k)$ and $J_n(k)$ are defined as
\begin{align*}
 I_n(k) &= \frac{1}{n^{3/2}} \sum_{i=1}^k \sum_{j=k+1}^n g(\xi_j-\xi_i) \\
 J_n(k)&= \frac{1}{n^{3/2}} \sum_{i=1}^k \sum_{j=k+1}^n \big( g(X_j-X_i)-  g(\xi_j-\xi_i) \big).
\end{align*}

We now analyze these two processes separately. Regarding $I_n(k)$, we obtain from the weighted functional central limit theorem for two-sample U-statistics that
\begin{align} \label{weighted_fclt}
\Big( \frac{1}{(\lambda (1-\lambda))^\gamma } I_n([n\lambda])  \Big)_{0\leq \lambda \leq 1}
\stackrel{\mathcal{D}}{\longrightarrow} \Big( \frac{\sigma}{( \lambda(1-\lambda))^\gamma }  W^{(0)}(\lambda)  \Big)_{0\leq \lambda \leq 1,}
\end{align}
see Theorem 2.11 in \cite{CsH:1997}. We will analyze the limit behavior of  $J_n(k)$ in two steps which we formulate as separate lemmas.

\begin{lemma} Under the conditions of Theorem \ref{thm:A1_G} 
\begin{equation}
 \max_{1\leq k< n} \frac{1}{ \big(\frac{k}{n} (1-\frac{k}{n})  \big)^\gamma } \big| J_n(k) -\E(J_n(k))\big| 
 \stackrel{P}{\longrightarrow} 0.
\label{eq:jn-conv}
\end{equation}
\label{lem:jn-conv}
\end{lemma}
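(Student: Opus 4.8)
The plan is to make the model structure of $J_n(k)$ explicit, center it, expand the resulting two-sample kernel by Hoeffding's decomposition, and estimate each piece separately with maximal inequalities that absorb the singular boundary weight $(\frac{k}{n}(1-\frac{k}{n}))^{-\gamma}$. Using \eqref{eq:x-process}, the term $g(X_j-X_i)-g(\xi_j-\xi_i)$ vanishes unless exactly one of the indices $i\le k<j$ lies on each side of $k_n^\ast$. Distinguishing the two cases one gets
\[
 J_n(k)=\frac{1}{n^{3/2}}\sum_{i=1}^{k}\sum_{j=k_n^\ast+1}^{n}h(\xi_i,\xi_j)\quad\text{for }k\le k_n^\ast,\qquad
 J_n(k)=\frac{1}{n^{3/2}}\sum_{i=1}^{k_n^\ast}\sum_{j=k+1}^{n}h(\xi_i,\xi_j)\quad\text{for }k\ge k_n^\ast,
\]
with $h(x,y)=g(y-x+\Delta_n)-g(y-x)$, and in both cases $\E J_n(k)=\phi_n(k)u(\Delta_n)/n^{3/2}$. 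Hoeffding's decomposition of the two-sample kernel $h$ reads $h(\xi_i,\xi_j)-u(\Delta_n)=h_1(\xi_i)+h_1^{(2)}(\xi_j)+h_2(\xi_i,\xi_j)$, where $h_1^{(2)}(y)=\E[h(\xi_1,y)]-u(\Delta_n)$ and $h_2$ is degenerate in both arguments; since $g$ is odd, $h_1^{(2)}$ is (up to sign) the first-coordinate projection of the kernel obtained by replacing $\Delta_n$ with $-\Delta_n$, to which the hypotheses apply verbatim, so that $v_n:=\max\{\Var(h_1(\xi_1)),\Var(h_1^{(2)}(\xi_1))\}\to 0$. Hence $(\frac{k}{n}(1-\frac{k}{n}))^{-\gamma}(J_n(k)-\E J_n(k))$ splits into a part linear in the left sample, a part linear in the right sample, and a degenerate part, and it suffices to show that the $\max$ over $1\le k<n$ of each of these three tends to $0$ in probability.

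For the linear parts I would split $\{1,\dots,n-1\}$ at $k_n^\ast=[\tau^\ast n]$. On the side where the moving sum is short --- e.g.\ $k\le k_n^\ast$ for the left-sample part, where the term equals $\frac{n-k_n^\ast}{n^{3/2}}\sum_{i=1}^{k}h_1(\xi_i)$ --- one bounds $(\lambda(1-\lambda))^{-\gamma}\le C\lambda^{-\gamma}$ and applies the H\'ajek--R\'enyi inequality with nondecreasing weights $b_k=k^{\gamma}$: since $2\gamma<1$ one has $\sum_{k\le n}k^{-2\gamma}=O(n^{1-2\gamma})$, and together with the prefactor of order $n^{\gamma-1/2}$ this gives a bound of order $v_n/\varepsilon^2\to0$. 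On the complementary side the weight stays bounded (because $\lambda$ remains bounded away from the relevant endpoint) while the prefactor is $O(n^{-1/2})$, so the crude estimate $\E\bigl(\tfrac{1}{\sqrt n}\sum_{i=1}^{k_n^\ast}h_1(\xi_i)\bigr)^2\le v_n$ already suffices. The right-sample part is treated identically, with a backward H\'ajek--R\'enyi inequality and weights $(n-k)^{\gamma}$ near $k=n$.

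For the degenerate part I would condition on one of the two samples to expose a conditional martingale with i.i.d., conditionally centred increments: for $k\le k_n^\ast$, given $(\xi_j)_{j>k_n^\ast}$ the summands $\zeta_i=\frac{1}{n^{3/2}}\sum_{j>k_n^\ast}h_2(\xi_i,\xi_j)$ are i.i.d.\ in $i$ with conditional mean $0$ by double degeneracy, and $\sum_{i\le k}\zeta_i$ is a conditional martingale. Applying the conditional H\'ajek--R\'enyi inequality (forward for $k\le k_n^\ast$ with weights $k^{\gamma}$, backward for $k\ge k_n^\ast$ with weights $(n-k)^{\gamma}$) and taking expectations, one obtains, using $\E[h_2(\xi_1,\xi_2)^2]\le\E[(g(\xi_2-\xi_1+\Delta_n)-g(\xi_2-\xi_1))^2]$ which is bounded under the moment hypothesis, a bound of order $n^{-1}\E[h_2(\xi_1,\xi_2)^2]$ times a bounded factor; the extra factor $n^{-1}$, which comes precisely from the double degeneracy, is what renders the singular weight harmless. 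Combining the three estimates gives \eqref{eq:jn-conv}.

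The step I expect to be the main obstacle is the uniform control of the boundary weight: setting up the weighted (conditional) H\'ajek--R\'enyi / degenerate $U$-process maximal inequalities with the singular factor $(\lambda(1-\lambda))^{-\gamma}$ near $\lambda\in\{0,1\}$ in the form actually needed, and checking carefully that the second-coordinate projection $h_1^{(2)}$ inherits the vanishing-variance property from the stated hypothesis on $h_1$.
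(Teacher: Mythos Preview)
Your proposal is correct and follows essentially the same architecture as the paper's proof: the same case split at $k_n^\ast$, the same Hoeffding decomposition of $h(x,y)=g(y-x+\Delta_n)-g(y-x)$, and the same use of H\'ajek--R\'enyi for the linear pieces (with the crude bound $(\lambda(1-\lambda))^{-\gamma}\le C\lambda^{-\gamma}$ near the left boundary) together with Chebyshev on the complementary side. The paper likewise asserts $\Var(h_2(\xi_1))\to 0$ for the second projection without further comment, so your concern there is shared.

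The one place you diverge slightly is the degenerate remainder. The paper does not condition or use a weighted H\'ajek--R\'enyi inequality; instead it observes that $\bigl(\sum_{i=1}^{k}\sum_{j=k_n^\ast+1}^{n}\psi(\xi_i,\xi_j)\bigr)_{1\le k\le k_n^\ast}$ is an \emph{unconditional} martingale with respect to the enlarged filtration $\mathcal{F}_k=\sigma(\xi_1,\dots,\xi_k,\xi_{k_n^\ast+1},\dots,\xi_n)$, bounds the weight crudely by $(\frac{k}{n}(1-\frac{k}{n}))^{-\gamma}\le C n^{\gamma}$, and applies Doob's maximal inequality to get a bound of order $n^{2\gamma-1}\Var(\psi(\xi_1,\xi_2))\to 0$. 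Your conditional route yields the sharper rate $O(n^{-1})$ but requires a conditioning-then-expectation step; the paper's argument is shorter at the cost of a weaker (though still sufficient) bound.
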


\begin{lemma}
Under the conditions of Theorem \ref{thm:A1_G} 
\begin{equation}
\max_{1\leq k< n}  \frac{1}{ \big(\frac{k}{n} (1-\frac{k}{n})  \big)^\gamma } \big| \E(J_n(k)) - c_g\phi_{\tau^\ast}(\frac{k}{n}) \big| 
 \longrightarrow 0,
\end{equation}
where $c_g= \lim_{n\rightarrow \infty} \sqrt{n}\, u(\Delta_n) $.
\label{lem:ejn-conv}
\end{lemma}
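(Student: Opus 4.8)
\emph{Plan.} Once the expectation is evaluated the statement becomes purely deterministic, so the strategy is: (i) compute $\E(J_n(k))$ in closed form; (ii) compare it term by term with $c_g\phi_{\tau^\ast}(k/n)$; (iii) absorb the weight $(\lambda(1-\lambda))^{-\gamma}$ using $\gamma<1$. No probabilistic machinery beyond linearity of expectation and independence is needed.

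\emph{Step 1 (the mean).} For $i<j$ the noise variables $\xi_i,\xi_j$ are i.i.d., and by the signal structure \eqref{eq:x-process} one has $X_j-X_i=\xi_j-\xi_i$ except when $i\le k_n^\ast<j$, in which case $X_j-X_i=\xi_j-\xi_i+\Delta_n$. Hence $\E\big[g(X_j-X_i)-g(\xi_j-\xi_i)\big]=u(\Delta_n)\,\mathbf{1}\{i\le k_n^\ast<j\}$, and summing over $1\le i\le k$, $k+1\le j\le n$ collapses to the clean identity $\E(J_n(k))=n^{-3/2}\,u(\Delta_n)\,\min(k,k_n^\ast)\big(n-\max(k,k_n^\ast)\big)=n^{-3/2}u(\Delta_n)\,\phi_n(k)$, valid for every $k$, where $\phi_n$ is the function introduced in the preliminaries.

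\emph{Step 2 (comparison).} Set $\lambda=k/n$, $\lambda_n^\ast=k_n^\ast/n$, and write $\sqrt n\,u(\Delta_n)=c_g+\varepsilon_n$ with $\varepsilon_n\to0$ (this is exactly the standing assumption that $c_g$ exists). Then $\E(J_n(k))-c_g\phi_{\tau^\ast}(\lambda)=c_g\big(\phi_n(k)/n^2-\phi_{\tau^\ast}(\lambda)\big)+\varepsilon_n\,\phi_n(k)/n^2$. Both $\phi_n(k)/n^2$ and $\phi_{\tau^\ast}(\lambda)$ have the form $\min(\lambda,a)\big(1-\max(\lambda,a)\big)$, with $a=\lambda_n^\ast$ respectively $a=\tau^\ast$, and since $k_n^\ast=[\tau^\ast n]$ we have $0\le\tau^\ast-\lambda_n^\ast<1/n$; moreover $k\le k_n^\ast$ forces $\lambda\le\tau^\ast$ while $k>k_n^\ast$ forces $\lambda>\tau^\ast$ (because $k_n^\ast+1>\tau^\ast n$). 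On the first range the difference $\phi_n(k)/n^2-\phi_{\tau^\ast}(\lambda)$ equals $\lambda(\tau^\ast-\lambda_n^\ast)$, on the second it equals $(1-\lambda)(\tau^\ast-\lambda_n^\ast)$; in particular it is $O(1/n)$ and, crucially, vanishes linearly at the relevant endpoint.

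\emph{Step 3 (the weight) and the obstacle.} For the $c_g$-term, dividing by $(\lambda(1-\lambda))^\gamma$ leaves $\lambda^{1-\gamma}(1-\lambda)^{-\gamma}/n\le(1-\tau^\ast)^{-\gamma}/n$ on $\{\lambda\le\tau^\ast\}$ and $(1-\lambda)^{1-\gamma}\lambda^{-\gamma}/n\le(\tau^\ast)^{-\gamma}/n$ on $\{\lambda>\tau^\ast\}$ (for $n$ large enough that $\lambda_n^\ast>0$); for the $\varepsilon_n$-term one uses $\phi_n(k)/n^2\le\lambda(1-\lambda)$ to get $(\lambda(1-\lambda))^{-\gamma}\phi_n(k)/n^2\le(\lambda(1-\lambda))^{1-\gamma}\le 4^{\gamma-1}$. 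Taking the maximum over $k$ yields the bound $|c_g|\,\max\{(1-\tau^\ast)^{-\gamma},(\tau^\ast)^{-\gamma}\}/n+4^{\gamma-1}|\varepsilon_n|\to0$, which is the assertion. The one point that really needs care is this last estimate: the weight $(\lambda(1-\lambda))^{-\gamma}$ diverges at $0$ and $1$, so a mere uniform $O(1/n)$ bound on $|\phi_n(k)/n^2-\phi_{\tau^\ast}(\lambda)|$ would be useless near the endpoints; one must exploit that this difference — and $\phi_n(k)/n^2$ itself — vanish there at linear rate, which together with $\gamma<1$ keeps the quotient uniformly $O(1/n)$. Everything else is routine.
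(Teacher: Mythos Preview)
Your proof is correct and follows the same basic route as the paper: compute $\E(J_n(k))=\phi_n(k)\,n^{-3/2}u(\Delta_n)$, then exploit the pointwise domination $\phi_n(k)/n^2\le \tfrac{k}{n}(1-\tfrac{k}{n})$ (equivalently $\phi_{\tau^\ast}(\lambda)\le\lambda(1-\lambda)$) to absorb the weight and reduce everything to $\sqrt{n}\,u(\Delta_n)\to c_g$. The only difference is cosmetic: the paper writes $\E(J_n(k))=\phi_{\tau^\ast}(k/n)\sqrt{n}\,u(\Delta_n)$ directly and bounds a single term, whereas you split into a $c_g$-piece and an $\varepsilon_n$-piece and thereby handle the $O(1/n)$ discrepancy between $\phi_n(k)/n^2$ and $\phi_{\tau^\ast}(k/n)$ explicitly --- if anything, your version is the more careful of the two.
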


\begin{proof}[Proof of Lemma~\ref{lem:jn-conv}] Observe that by definition of the process $(X_i)_{i\geq 1}$, we get
\begin{equation}
g(X_j-X_i) = \left\{ 
\begin{array}{ll}
g(\xi_j-\xi_i) & \mbox{ for } 1\leq i,~ j \leq k_n^\ast \mbox{ or } k_n^\ast+1\leq i,~j\leq n
\\[2mm]
g(\xi_j-\xi_i+\Delta_n) & \mbox{ for } 1\leq i\leq k_n^\ast, ~  k_n^\ast+1\leq j\leq n.
 \end{array}
\right.
\end{equation}
Thus we obtain
\begin{equation}
J_n(k) = \left\{ 
\begin{array}{ll}
\frac{1}{n^{3/2}} \sum_{i=1}^k \sum_{j=k_n^\ast+1}^n [g(\xi_j-\xi_i +\Delta_n ) -g(\xi_j-\xi_i)] & \mbox{ for } k\leq k_n^\ast 
\\[2mm]
\frac{1}{n^{3/2}} \sum_{i=1}^{k_n^\ast} \sum_{j=k+1}^n [g(\xi_j-\xi_i +\Delta_n ) -g(\xi_j-\xi_i)] & \mbox{ for } k\geq k_n^\ast +1. 
 \end{array}
\right.
\end{equation}
By the Hoeffding decomposition, applied to the kernel 
\[
  h(x,y) =g(y-x+\Delta_n)-g(y-x),
\]
we obtain
\begin{align*}
 u(\Delta_n) &= \E(h(\xi,\eta))=\E[g(\eta-\xi+\Delta_n)-g(\eta-\xi)  ] \\
 h_1(x) &= \E(h(x,\eta))-u(\Delta_n)= \E[ g(\eta-x+\Delta_n) -g(\eta-x) ] -u(\Delta_n) \\
 h_2(y) & = \E(h(\xi, y))-u(\Delta_n) = \E[g(y-\xi+\Delta_n)-g(y-\xi)] -u(\Delta_n) \\
 \psi(x,y)&= h(x,y)-u(\Delta_n) -h_1(x)-h_2(y),
\end{align*}
where $\xi$ and $\eta$ are two independent random variables with the same distribution as $\xi_1$. Note that by definition, 
\[
  h(\xi_i,\xi_j) =u(\Delta_n) +h_1(\xi_i) +h_2(\xi_j) +\psi(\xi_i,\xi_j)
\]
and that all the terms on the r.h.s. are mutually uncorrelated. Then we get for $k\leq k_n^\ast$ 
\begin{align}
J_n(k)-\E(J_n(k)) &= \frac{1}{n^{3/2}} \sum_{i=1}^k \sum_{j=k_n^*+1}^n  [h_1(\xi_i)+h_2(\xi_j)] +\frac{1}{n^{3/2}} 
\sum_{i=1}^k \sum_{j=k_n^*+1}^n \psi (\xi_i,\xi_j)  \label{eq:jnk-ineq} \\
&= \frac{n-k_n^\ast}{n^{3/2}} \sum_{i=1}^k h_1(\xi_i) +\frac{k}{n^{3/2}}  \sum_{i=k_n^\ast}^n h_2(\xi_i) +\frac{1}{n^{3/2}} 
\sum_{i=1}^k \sum_{j=k_n^*+1}^n \psi (\xi_i,\xi_j) \nonumber
\end{align} 
We will now analyze the three terms on the right hand side separately. Regarding the first term, using $k_n^\ast=[n\tau^\ast]$,  we obtain
\begin{align*}
 \max_{1\leq k\leq k_n^\ast} \frac{1}{\big(\frac{k}{n} (1-\frac{k}{n})\big)^\gamma } \frac{n-k_n^\ast}{n^{3/2}} 
 \big| \sum_{i=1}^k h_1(\xi_i) \big| 
 \leq (1-\tau^\ast)^{-\gamma} n^{\gamma-\frac{1}{2}}  \max_{1\leq k\leq k_n^\ast} \frac{1}{k^\gamma} \big| \sum_{i=1}^k h_1(\xi_i) \big|. 
\end{align*}
Note that  $(k^\gamma)_{1\leq k\leq k_n^\ast}$ is an increasing sequence, so that we may apply the H\'ajek-R\'enyi inequality to obtain
\begin{align*}
&P\big( \max_{1\leq k\leq k_n^\ast} \frac{1}{\big(\frac{k}{n} (1-\frac{k}{n})\big)^\gamma } \frac{n-k_n^\ast}{n^{3/2}} 
 \big| \sum_{i=1}^k h_1(\xi_i) \big| \geq \epsilon\big)  \\
 & \leq P\big( \max_{1\leq k\leq k_n^\ast} \frac{1}{k^\gamma}
 \big| \sum_{i=1}^k h_1(\xi_i) \big| \geq \epsilon (1-\tau^\ast)^\gamma n^{1/2-\gamma} \big) \\
 & \leq \frac{1}{\epsilon^2 (1-\tau^\ast)^{2\gamma} n^{1-2\gamma}}\sum_{j=1}^{k_n^\ast} \frac{1}{j^{2\gamma}} \Var(h_1(\xi)) \\
 &\leq C\,  \Var(h_1(\xi)). 
\end{align*}
This converges to zero for $n\rightarrow \infty$, as $\Var(h_1(\xi))\rightarrow 0.$ 
\\
Regarding the second term on the right hand side of \eqref{eq:jnk-ineq}, we obtain
\begin{align*}
   \max_{1\leq k\leq k_n^\ast} \frac{1}{\big(\frac{k}{n} (1-\frac{k}{n})\big)^\gamma } \frac{k}{n^{3/2}} 
 \big| \sum_{i=k_n^*+1}^n h_1(\xi_i) \big| 
 &\leq \frac{k^{1-\gamma} n^\gamma}{(1-\tau^\ast)^\gamma n^{3/2}}  \big|  \sum_{i=k_n^*+1}^n h_1(\xi_i) \big| \\
 & \leq \frac{(\tau^\ast)^{1-\gamma} }{ (1-\tau^\ast)^\gamma} \frac{1}{ n^{1/2}} \big|  \sum_{i=k_n^*+1}^n h_1(\xi_i) \big|.
\end{align*}
Hence, using Chebychev's inequality, we obtain
\begin{align*}
   P(\max_{1\leq k\leq k_n^*} \frac{1}{\big(\frac{k}{n} (1-\frac{k}{n})\big)^\gamma } \frac{k}{n^{3/2}} 
 \big| \sum_{i=k_n^*+1}^n h_1(\xi_i) \big|  \geq \epsilon) 
&  \leq  
P(
 \big| \sum_{i=k_n^*+1}^n h_1(\xi_i) \big|  \geq C \epsilon n^{1/2} )  \\
 & \leq \frac{1}{C^2 \epsilon^2 n} \Var( \sum_{i=k_n^*+1}^n h_2(\xi_i)) \leq C\, \Var(h_2(\xi_1)), 
\end{align*} 
where $\Var(h_2(\xi_1))\rightarrow 0.$
\\
Regarding the third term on the right hand side of \eqref{eq:jnk-ineq}, the process
\[
\Big(\sum_{i=1}^k\sum_{j=k_n^*+1}^n \psi(\xi_i,\xi_j)\Big)_{1\leq k \leq k_n^\ast}
\] 
is a martingale with respect to the filtration $\mathcal{F}_k= \sigma(\xi_1, \dots, \dots \xi_k, \xi_{k_n^*+1}, \dots, \xi_n)$. Clearly,  $\sum_{i=1}^k\sum_{j=k_n^*+1}^n \psi(\xi_i,\xi_j)$ is adapted to $\mathcal{F}_k$. Moreover, for $m>k$,
\begin{align*}
\E \Big(\sum_{i=1}^m \sum_{j=k_n^*+1}^n \psi(\xi_i,\xi_j) \Big| \mathcal{F}_k \Big)  &= \E \Big(\sum_{i=1}^k \sum_{j=k_n^*+1}^n \psi(\xi_i,\xi_j) \Big| \mathcal{F}_k \Big) + \E \Big(\sum_{i=k+1}^m \sum_{j=k_n^*+1}^n \psi(\xi_i,\xi_j) \Big| \mathcal{F}_k \Big) \\
&= \sum_{i=1}^k \sum_{j=k_n^*+1}^n \psi(\xi_i,\xi_j),
\end{align*}
as $\sum_{i=1}^k \sum_{j=k_n^*+1}^n \psi(\xi_i,\xi_j)$ is $\mathcal{F}_k$-measurable and 
\begin{align*}
\E \Big(\sum_{i=k+1}^m \sum_{j=k_n^*+1}^n \psi(\xi_i,\xi_j) \Big| \mathcal{F}_k \Big) = \sum_{i=k+1}^m \sum_{j=k_n^*+1}^n \E ( \psi(\xi_i,\xi_j) | \mathcal{F}_k) =  \sum_{i=k+1}^m \sum_{j=k_n^*+1}^n \E ( \psi(\xi_i,\xi_j) | \xi_j) =0,
\end{align*} 
where the last equality holds, as $\psi$ is degenerate, i.e.\ $\E(\psi(\xi_1,x))=0$.
Furthermore, we get 
$
 (\frac{k}{n} (1-\frac{k}{n}))^\gamma\geq n^{-\gamma}(1-\frac{k_n^\ast}{n}) \geq (1-\tau^\ast) n^{-\gamma},$
and hence 
\[
 \frac{1}{\big(\frac{k}{n}(1-\frac{k}{n} ) \big)^\gamma n^{3/2}} \leq C n^{\gamma -3/2}.
\]
Thus, we finally obtain from Doob's maximal inequality 
\begin{align*}
& P\Big(\max_{1\leq k\leq k_n^\ast} \frac{1}{\big(\frac{k}{n}(1-\frac{k}{n} ) \big)^\gamma n^{3/2}}  \big|\sum_{i=1}^k \sum_{j=k_n^*+1}^n \psi(\xi_i,\xi_j)\big| \geq \epsilon  \Big)   \\
&\qquad\leq P\Big(\max_{1\leq k\leq k_n^\ast}  C n^{\gamma-3/2} \big| \sum_{i=1}^k \sum_{j=k_n^*+1}^n \psi(\xi_i,\xi_j)\big| \geq \epsilon  \Big)  \\
&\qquad \leq P\Big(\max_{1\leq k\leq k_n^\ast}  \big| \sum_{i=1}^k \sum_{j=k_n^*+1}^n \psi(\xi_i,\xi_j)\big| \geq C \epsilon n^{3/2-\gamma}  \Big)  \\
&\qquad \leq \frac{1}{C\epsilon^2 n^{3-2\gamma} } \Var\Big(  \sum_{i=1}^k \sum_{j=k_n^*+1}^n \psi(\xi_i,\xi_j)\Big)\\
&\qquad \leq C n^{2\gamma -3} k_n^\ast (n-k_n^\ast) \Var(\psi(\xi_1,\xi_2)) \leq Cn^{2\gamma -1} \Var(\psi(\xi_1,\xi_2))
\end{align*}
Since $\gamma<1/2$, the right hand side converges to zero as $n\rightarrow \infty$. Hence we have shown that $\max_{1\leq k\leq k_n^\ast} \frac{1}{(\frac{k}{n}(1-\frac{k}{n}))^\gamma} |J_n(k)-\E J_n(k)| \rightarrow 0$.  In an analogous way, we can establish that $\max_{ k_n^\ast \leq k< n} \frac{1}{(\frac{k}{n}(1-\frac{k}{n}))^\gamma} |J_n(k)-\E J_n(k)| \rightarrow 0$. 
\end{proof}

\begin{proof}[Proof of Lemma~\ref{lem:ejn-conv}] Observe that 
\begin{equation}
\E(J_n(k)) = \left\{ 
\begin{array}{ll}
k(n-k_n^\ast)\frac{1}{n^{3/2}} u(\Delta_n) & \mbox{ for } k\leq k_n^\ast 
\\[2mm]
k_n^\ast (n-k)\frac{1}{n^{3/2}} u(\Delta_n) & \mbox{ for } k\geq k_n^\ast . 
 \end{array}
\right.
\end{equation}
By the definition of $\phi_n(k)$ and $\phi_{\tau^*}(\lambda)$, we obtain
\[
  \E(J_n(k)) =  \phi_n(k) \frac{1}{n^{3/2}} u(\Delta_n) =\phi_{\tau^\ast}(\frac{k}{n}) \,\sqrt{n}\,  u(\Delta_n) .
\]
Then we have 
\begin{align*}
& \max_{1\leq k< n}  \frac{1}{ \big(\frac{k}{n} (1-\frac{k}{n})  \big)^\gamma } \big| \E(J_n(k)) - c_g \, \phi_{\tau^\ast}(\frac{k}{n}) \big|  \\
= & \max_{1\leq k< n}  \frac{1}{ \big(\frac{k}{n} (1-\frac{k}{n})  \big)^\gamma } \big| \sqrt{n}\  u(\Delta_n) \, \phi_{\tau^\ast}(\frac{k}{n}) - c_g \, \phi_{\tau^\ast}(\frac{k}{n}) \big| \\
 \leq & \max_{1\leq k< n}  \frac{1}{ \big(\phi_{\tau^\ast}(\frac{k}{n})   \big)^\gamma } \big| \sqrt{n}\  u(\Delta_n) \, \phi_{\tau^\ast}(\frac{k}{n}) - c_g \, \phi_{\tau^\ast}(\frac{k}{n}) \big| \\
 =&\max_{1\leq k< n}  \Big(\phi_{\tau^\ast}(\frac{k}{n})   \Big)^{1-\gamma} \, \big| \sqrt{n}\  u(\Delta_n)  - c_g \big| \\
 = & \big(\phi_{\tau^\ast}(\tau^*)   \big)^{1-\gamma} \, \big| \sqrt{n}\  u(\Delta_n)  - c_g \big|.
\end{align*}
This converges to zero for $n \rightarrow \infty$, as $c_g=\lim_{n \rightarrow \infty} \sqrt{n} u(\Delta_n).$
\end{proof}
Now those two lemma, we can deduce the limit behavior of $J_n(k)$. Together with \ref{weighted_fclt}, we can conclude the statement of the theorem.   
\end{proof}

\subsection{Proof of Theorem \ref{thm:A2_G0}}
\begin{proof}[\unskip\nopunct]
Let $I_n(k)$, $J_n(k)$ and $u(\Delta)$ be defined as in the proof of Theorem \ref{thm:A1_G}. Set
\begin{align*}
Z_n &:=  \max_{1\leq k < n} | I_n(k) + J_n(k)| , \\
Z_{n,m} &:= \max \left\{ \max_{k \leq n/m} |J_n(k)|, \max_{k > n/m} |I_n(k)+J_n(k)| \right\} ,\\
Z_{(m)} &:= \max \left\{ |c u(\Delta)|, \sup_{\lambda > 1/m}  | \sigma W^{(0)}(\lambda) + c(1-\lambda) u(\Delta) | \right\}, \\
Z &:=  \sup_{0\leq \lambda \leq 1} | \sigma W^{(0)}(\lambda) + c(1-\lambda) u(\Delta) | .
\end{align*}
The idea is to show 
\begin{align}
Z_{n,m} \claw Z_{(m)}, ~ \text{as} ~ n\rightarrow \infty , \label{tilde:Znm-Zm}\\
Z_{(m)} \claw Z, ~ \text{as} ~ m\rightarrow \infty , \label{tilde:Zm-Z} \\
\lim_{m \rightarrow \infty } \limsup_{n \rightarrow \infty } P(|Z_{n,m}-Z_n| \geq \varepsilon)=0, \label{tilde:Znm-Zn}
\end{align}
and to deduce the convergence $Z_n \claw Z$, for $n\rightarrow \infty$, from Billingsley's triangle theorem (Theorem 3.2 in \cite{B:1999}).

First, we show \eqref{tilde:Znm-Zm}. From Lemma \ref{lem:weighted_jn-conv_A2}, we know that $\max_{1\leq k < n} |J_n(k)-\E(J_n(k))| \cpr 0$. Thus, in order to show ${\max_{k \leq n/m} |J_n(k)| \rightarrow |cu(\Delta)|}$, it suffices to show that $\max_{k \leq n/m} |\E (J_n(k))| \rightarrow |cu(\Delta)|$. As indicated in the proof of Lemma \ref{lem:ejn-conv}, we have
\begin{align*}
 \psi_n(k):= \E(J_n(k)) =   \frac{\phi_n(k)}{n^{3/2}} u(\Delta)= \left\{ 
\begin{array}{ll}
\frac{k(n-c\sqrt{n})}{n^{3/2}} u(\Delta) & \mbox{ for } k\leq k_n^\ast =c\sqrt{n}
\\[2mm]
\frac{c\sqrt{n} (n-k)}{n^{3/2}} u(\Delta) & \mbox{ for } k\geq k_n^\ast =c\sqrt{n} . 
 \end{array}
\right.
\end{align*}
As $\psi_n(k)$ is monotonically increasing for $k\leq k_n^*$ and monotonically decreasing for $k\geq k_n^*$, it takes its maximum value at $k=k_n^*=c\sqrt{n}$. We obtain
\begin{align*}
\psi_n(k_n^*)=\frac{c\sqrt{n}(n-c\sqrt{n})}{n^{3/2}} u(\Delta)=c\big(1-\frac{1}{\sqrt{n}} \big) u(\Delta) \longrightarrow cu(\Delta), \text{ as } n\rightarrow \infty.
\end{align*}
Thus, as $\frac{n}{m}> c \sqrt{n}$ for $n$ large enough, we obtain
\begin{align*}
\max_{k\leq n/m} |\E(J_n(k))| = |\psi_n(k_n^*)| \longrightarrow cu(\Delta), \text{ as } n\rightarrow \infty.
\end{align*}
Moreover, we have
\begin{align*}
\sup_{\lambda >1/m} \big| \E (J_n(\lambda n))-c (1-\lambda )u(\Delta) \big|
= \sup_{\lambda >1/m} \Big| \frac{c\sqrt{n} (n-\lambda n)}{n^{3/2}} u(\Delta) -c (1-\lambda) u(\Delta) \Big| = 0.
\end{align*}
Together with the functional central limit theorem for two-sample U-statistics, i.e.
\begin{align*}
\Big( I_n([\lambda n]) \Big)_{0 \leq \lambda \leq 1} \claw \Big(\sigma W^{(0)}([\lambda n]) \Big)_{0 \leq \lambda \leq 1},
\end{align*}
we can deduce weak convergence of the process $(I_n([\lambda n])+J_n([\lambda n]))_{(\lambda \in [1/m,1])}$ to $(\sigma W^{(0)}(\lambda)+c(1-\lambda)u(\Delta))_{\lambda \in [1/m,1]}.$ Thus we can conclude \eqref{tilde:Znm-Zm}. From the continuity of $(\sigma W^{(0)}(\lambda)+c(1-\lambda)u(\Delta))_{0\leq\lambda \leq 1}$ and as 
\begin{align*}
\max\{|cu(\Delta)|, \sup_{0\leq \lambda \leq 1 } | \sigma W^{(0)}(\lambda)+c(1-\lambda)u(\Delta)| \} = \sup_{0\leq \lambda \leq 1 } | \sigma W^{(0)}(\lambda)+c(1-\lambda)u(\Delta)|,
\end{align*}
we can deduce \eqref{tilde:Zm-Z}. For \eqref{tilde:Znm-Zn}, note that $ |Z_{n,m}-Z_n| \leq \max_{k \leq n/m} |I_n(k)|$ and
\begin{align*}
\max_{k \leq n/m} |I_n(k)| \claw \sup_{\lambda <1/m} |\sigma W^{0}(\lambda)|.
\end{align*}
Thus, we obtain
\begin{align*}
 \lim_{m \rightarrow \infty}\limsup_{n\rightarrow \infty} P(|Z_{n,m}-Z_n|\geq \epsilon) 
 \leq &  \lim_{m \rightarrow \infty}\limsup_{n\rightarrow \infty} P\Big(\max_{k \leq n/m} |I_n(k)| \geq \epsilon\Big) \\
 \leq & \lim_{m \rightarrow \infty} P\Big(\sup_{\lambda < 1/m} |\sigma W^{(0)}(\lambda)| \geq \epsilon \Big) = 0
\end{align*}
in the final step.
\end{proof}

\subsection{Proof of Theorem \ref{thm:A2_G}}
\begin{proof}[\unskip\nopunct]
Define
\begin{align*}
I_n^{\gamma}(k) &:= \frac{1}{\big(\frac{k}{n}(1-\frac{k}{n})\big)^{\gamma}} I_n(k), \\
J_n^{\gamma}(k) &:= \frac{1}{\big(\frac{k}{n}(1-\frac{k}{n})\big)^{\gamma}} J_n(k),
\end{align*}
where $I_n(k)$ and $J_n(k)$ are defined as in the proof of Theorem \ref{thm:A1_G}. Then $G_n^{\gamma}(k) = I_n^{\gamma}(k)+J_n^{\gamma}(k).$ We proceed analogously to the proof of Theorem \ref{thm:A2_G0} and define 
\begin{align*}
Z^{\gamma}_n &:=  \max_{1\leq k < n} | I_n^{\gamma}(k) + J_n^{\gamma}(k)| , \\
Z^{\gamma}_{n,m} &:= \max \left\{ \max_{k \leq n/m} |J_n^{\gamma}(k)|, \max_{k > n/m} |I_n^{\gamma}(k)+J_n^{\gamma}(k)| \right\} ,\\
Z^{\gamma}_{(m)} &:= \max \left\{c^{1-\gamma}u(\Delta), \sup_{1/m\leq \lambda \leq 1}\frac{\sigma}{(\lambda(1-\lambda))^\gamma} \big| W^{(0)}(\lambda) \big|\right\}\\
Z^{\gamma} &:= \max\left\{c^{1-\gamma}u(\Delta), \sup_{0\leq \lambda \leq 1}\frac{\sigma}{(\lambda(1-\lambda))^\gamma} \big| W^{(0)}(\lambda) \big|\right\}. 
\end{align*}
To prove that $Z^{\gamma}_n \claw Z^{\gamma}$, for $n\rightarrow \infty$, we show 
\begin{align}
Z^{\gamma}_{n,m} \claw Z^{\gamma}_{(m)}, ~ \text{as} ~ n\rightarrow \infty , \label{Znm-Zm}\\
Z^{\gamma}_{(m)} \claw {Z^{\gamma}}, ~ \text{as} ~ m\rightarrow \infty , \label{Zm-Z} \\
\lim_{m \rightarrow \infty } \limsup_{n \rightarrow \infty } P(|Z^{\gamma}_{n,m}-Z^{\gamma}_n| \geq \varepsilon)=0. \label{Znm-Zn}
\end{align}

First, we show \eqref{Znm-Zm}. From Lemma \ref{lem:weighted_jn-conv_A2} we know that for $n\rightarrow \infty$, $\max_{1\leq k < n } |\E (J_n^{\gamma}(k))-J_n^{\gamma}(k) | \rightarrow 0.$ Thus, in order to show $\max_{k\leq n/m} |J_n^{\gamma}(k)| \rightarrow c^{1-\gamma}u(\Delta)$, it suffices to show that $\max_{k\leq n/m} |\E (J_n^{\gamma}(k))| \rightarrow c^{1-\gamma}u(\Delta)$. With $k_n^*=cn^{\kappa}$ we obtain
\begin{align*}
 \E (J_n(k)) =   \frac{\phi_n(k)}{n^{3/2}} u(\Delta)= \left\{ 
\begin{array}{ll}
\frac{k(n-cn^{\kappa})}{n^{3/2}} u(\Delta) & \mbox{ for } k\leq k_n^\ast =cn^{\kappa}
\\[2mm]
\frac{cn^{\kappa} (n-k)}{n^{3/2}} u(\Delta) & \mbox{ for } k\geq k_n^\ast =cn^{\kappa}.
 \end{array}
\right.
\end{align*}
Define ${\psi}^{\gamma}_n(k):= \E (J_n^{\gamma} (k)) = \frac{1}{(\frac{k}{n} (1-\frac{k}{n}))^{\gamma}} \E (J_n(k)) =\frac{n^{2\gamma}}{k^{\gamma}(n-k)^{\gamma}} \E (J_n(k))$. Then we have 
\begin{align} \label{psigamma}
{\psi}^{\gamma}_n(k) = \left\{ 
\begin{array}{ll}
\frac{k^{1-\gamma}}{(n-k)^{\gamma}} \, n^{2\gamma-1/2}(1-cn^{\kappa-1})u(\Delta) & \mbox{ for } k\leq k_n^\ast =cn^{\kappa}
\\[2mm]
\frac{(n-k)^{1-\gamma}}{k^{\gamma}}\, cn^{2\gamma+\kappa-3/2}u(\Delta) & \mbox{ for } k\geq k_n^\ast =cn^{\kappa}.
 \end{array}
\right.
\end{align}
 ${\psi}^{\gamma}_n(k)$ is monotonically increasing for $k\leq k_n^*$ and monotonically decreasing for $k\geq k_n^*$, i.e.\ it takes its maximum at $k=k_n^* \approx cn^{\kappa}$ and  
 \begin{align*}
 {\psi}^{\gamma}_n(cn^{\kappa})= (1-cn^{\kappa-1})^{1-\gamma} \, c^{1-\gamma} \, n^{\gamma+\kappa-\kappa \gamma-1/2} u(\Delta) = (1-cn^{\kappa-1})^{1-\gamma} \, c^{1-\gamma} u(\Delta) \longrightarrow c^{1-\gamma} u(\Delta),
 \end{align*}
 as $\gamma+\kappa-\kappa \gamma-1/2=0$ and $\kappa-1<0$. For $n$ so large that $ \frac{n}{m} > cn^\kappa=k_n^*$, we obtain with the definition of $\psi^{\gamma}_n(k) = \E (J_n^{\gamma}(k))$ in \eqref{psigamma}, for $n\rightarrow \infty,$
\begin{align}
\max_{k< n/m} \big| \psi^{\gamma}_n(k)\big| &= \big| \psi^{\gamma}_n(k_n^*) \big|   \longrightarrow c^{1-\gamma} u(\Delta) , \label{psi2} \\
 \max_{k \geq n/m} \big| \psi^{\gamma}_n(k)\big| =\psi_n\Big(\frac{n}{m}\Big) &= \frac{(n-\frac{n}{m})^{1-\gamma}}{(\frac{n}{m})^{\gamma}} cn^{2\gamma+\kappa-3/2} u(\Delta)
   =c\big(1-\frac{1}{m} \big)^{1-\gamma}m^{\gamma} n^{\kappa -\frac{1}{2}} u(\Delta) \longrightarrow 0. \label{psi1}    
\end{align}
From Theorem 3 in \cite{CsSW:2008} we can deduce
\begin{equation*}
\max_{k> n/m}\big|I^{\gamma}_n(k)\big| \claw \sup_{1/m\leq \lambda \leq 1}\frac{1}{(\lambda(1-\lambda))^\gamma}| W^{(0)}(\lambda)|, \text{ as } n \rightarrow \infty.
\end{equation*}
Together with (\ref{psi2}), \eqref{psi1}, and Lemma \ref{lem:weighted_jn-conv_A2}, this implies \eqref{Znm-Zm}. Additionally, \eqref{Zm-Z} follows from the continuity of the process $ (W^{(0)}(\lambda)/(\lambda(1-\lambda))^\gamma)_{0\leq \lambda \leq 1}.$ It remains to show \eqref{Znm-Zn}. For this, note that
\begin{equation*}
\left| Z^{\gamma}_{n,m}-Z^{\gamma}_n \right| \leq \max_{k\geq n/m} \big| I_n^{\gamma}(k)\big|+ \max_{k\leq n/m}\big|J_n^{\gamma}(k) \big|.
\end{equation*}
The convergene to zero of the first summand is guaranteed by (\ref{psi1}). Using Theorem 3 in \cite{CsSW:2008}, there is a sequence of Brownian bridges $W^{(n)}$, such that
\begin{align*}
& P\Big(\Big|\max_{l\leq n/m}\big|I_n^{\gamma}(l)\big|-\sup_{\lambda \leq 1/m}\frac{1}{(\lambda(1-\lambda))^\gamma}| W^{(n)}(\lambda)|\Big|>\frac{\varepsilon}{2}\Big)\\
\leq  & P\Big(\sup_{\lambda \leq 1/m}\Big|I_n^{\gamma}([n\lambda])-\frac{1}{(\lambda(1-\lambda))^\gamma}W^{(n)}(\lambda)\Big|>\frac{\varepsilon}{2}\Big)\longrightarrow 0, \text{ as } n \rightarrow \infty.
\end{align*} 
So we can conclude that 
\begin{equation*}
\limsup_{n\rightarrow\infty}P\left(|Z^{\gamma}_{n,m}-Z^{\gamma}_n|>\varepsilon\right)\leq \limsup_{n\rightarrow\infty}P\left(\sup_{\lambda \leq 1/m}\frac{1}{(\lambda(1-\lambda))^\gamma}|W^{(n)}(\lambda)|>\frac{\varepsilon}{2}\right)\xrightarrow{m\rightarrow\infty}0,
\end{equation*}
because $W^{(n)}$ has the same distribution as $W^{(1)}$ and
\begin{equation*}
\sup_{\lambda \leq 1/m}\frac{1}{(\lambda(1-\lambda))^\gamma}|W^{(1)}(\lambda)|)\xrightarrow{m\rightarrow\infty}0
\end{equation*}
almost surely.
\end{proof}

\begin{lemma}\label{lem:weighted_jn-conv_A2}
Under the conditions of Theorem \ref{thm:A2_G0} and \ref{thm:A2_G} it holds for $0 \leq \gamma <1/2$
\begin{align*}
\max_{1 \leq k < n} \frac{1}{\big(\frac{k}{n}(1-\frac{k}{n}) \big)^{\gamma}}|J_n(k)-\E(J_n(k))| \cpr  0,~\text{as } n \rightarrow \infty,                                                                                                                             
\end{align*}
where $J_n(k)$ is defined as in the proof of Lemma \ref{lem:jn-conv}. 
\end{lemma}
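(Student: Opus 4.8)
The plan is to run the argument of the proof of Lemma~\ref{lem:jn-conv} essentially verbatim, with one structural change forced by the fact that under $A_2$ the change point $k_n^\ast\approx cn^\kappa$ is of smaller order than $n$: the lower bound $\bigl(\tfrac kn(1-\tfrac kn)\bigr)^\gamma\ge(1-\tau^\ast)n^{-\gamma}$ that was available in Lemma~\ref{lem:jn-conv} is no longer valid near $k=1$, so the weighted maximum must be controlled by a genuine maximal inequality rather than by pulling the weight out of the supremum. First I would split $\{1,\dots,n-1\}$ into the three regions $1\le k\le k_n^\ast$, $k_n^\ast<k\le n/2$ and $n/2\le k<n$; on these, the weight $\bigl(\tfrac kn(1-\tfrac kn)\bigr)^{-\gamma}$ is comparable, up to absolute constants, to $(n/k)^{\gamma}$, $(n/k)^{\gamma}$ and $(n/(n-k))^{\gamma}$ respectively. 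On each region I insert the Hoeffding decomposition of $h(x,y)=g(y-x+\Delta)-g(y-x)$ exactly as there, which writes $J_n(k)-\E(J_n(k))$ as a sum of a term linear in $h_1$, a term linear in $h_2$, and a degenerate term in $\psi$; since $g(\xi_2-\xi_1)$ has finite second moments, $\Var(h_1(\xi_1))$, $\Var(h_2(\xi_1))$ and $\Var(\psi(\xi_1,\xi_2))$ are all finite, and --- this is the one place the argument differs in spirit from Lemma~\ref{lem:jn-conv} --- I would only use \emph{finiteness} of these variances, not their vanishing.

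For the linear terms I would argue as in the proof of Lemma~\ref{lem:jn-conv}. A partial sum of $h_1$ (or $h_2$) whose number of summands grows with $k$ (or with $n-k$), carrying the increasing power weight $k^{-\gamma}$ (or $(n-k)^{-\gamma}$), is handled by the H\'ajek--R\'enyi inequality; the only new feature is that the resulting sum $\sum_{j=1}^{k_n^\ast}j^{-2\gamma}\asymp(k_n^\ast)^{1-2\gamma}\asymp n^{\kappa(1-2\gamma)}$ must be multiplied by the explicit powers of $n$ coming from the prefactors, after which $k_n^\ast\asymp n^\kappa$ together with the identity $\gamma+\kappa(1-\gamma)=\tfrac12$ (equivalently $\kappa=\tfrac{1-2\gamma}{2(1-\gamma)}$) makes the resulting exponent of $n$ strictly negative, e.g.\ $(\kappa-1)(1-2\gamma)<0$ for the first term. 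A term carrying a \emph{full} sum, such as $\frac{k}{n^{3/2}}\sum_{j=k_n^\ast+1}^n h_2(\xi_j)$, has a deterministic prefactor whose weighted maximum over the region is attained at $k=k_n^\ast$ and is there of order $n^{-1}$; since $\Var\bigl(\sum_{j>k_n^\ast}h_2(\xi_j)\bigr)\le n\,\Var(h_2(\xi_1))$, Chebyshev's inequality finishes it. The remaining linear terms in the two outer regions are entirely analogous, with $\sum_{i\le k}$ and $\sum_{j>k}$ interchanged and H\'ajek--R\'enyi applied in the variable $n-k$ whenever the weight degenerates at $k\approx n$.

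The main work, and the step I expect to be the obstacle, is the degenerate term. On $1\le k\le k_n^\ast$ this is $\frac1{n^{3/2}}\sum_{i=1}^k\sum_{j=k_n^\ast+1}^n\psi(\xi_i,\xi_j)$ weighted by $(n/k)^{\gamma}$, and the weight is unbounded as $k\to 1$, so no uniform lower bound on the denominator is available as it was in Lemma~\ref{lem:jn-conv}. Here I would recall from that proof that $\bigl(\sum_{i=1}^k\sum_{j=k_n^\ast+1}^n\psi(\xi_i,\xi_j)\bigr)_{1\le k\le k_n^\ast}$ is an $L^2$-martingale, and split the index range $[1,k_n^\ast]$ into the dyadic blocks $[2^{l-1},2^l]$, $1\le l\le\lceil\log_2 k_n^\ast\rceil$. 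On the $l$-th block $k^\gamma\asymp 2^{l\gamma}$, so Doob's $L^2$ maximal inequality gives
\[
P\Bigl(\max_{2^{l-1}\le k\le 2^l}\frac{n^\gamma}{k^\gamma\,n^{3/2}}\Bigl|\sum_{i=1}^k\sum_{j=k_n^\ast+1}^n\psi(\xi_i,\xi_j)\Bigr|\ge\varepsilon\Bigr)\le\frac{C}{\varepsilon^2}\,\Var(\psi(\xi_1,\xi_2))\,2^{l(1-2\gamma)}\,n^{2\gamma-2}.
\]
Summing over $l$ and using $1-2\gamma>0$ produces a geometric series dominated by its top term $\asymp(k_n^\ast)^{1-2\gamma}\asymp n^{\kappa(1-2\gamma)}$, and the short computation $2\gamma-2+\kappa(1-2\gamma)=\tfrac{4\gamma-3}{2(1-\gamma)}<0$ shows the bound tends to $0$. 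The same dyadic-plus-Doob device handles the degenerate terms in the two outer regions, where the dyadic decomposition is carried out in the variable $n-k$ when the weight degenerates at $k\approx n$. Collecting the finitely many estimates over the three regions proves the claim; throughout, the delicate point is purely arithmetical --- checking that the bookkeeping of exponents, which rests entirely on $\kappa=\tfrac{1-2\gamma}{2(1-\gamma)}$ and on $\gamma<\tfrac12$, always yields a negative power of $n$.
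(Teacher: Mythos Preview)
Your proposal is correct, and the linear terms are handled exactly as in the paper (H\'ajek--R\'enyi for the running sums, Chebyshev/Kolmogorov for the full sums, using only \emph{finiteness} of the variances since $\Delta$ is fixed under $A_2$). The one genuine difference is your treatment of the degenerate term, and here you have made the argument harder than necessary.

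You write that the uniform lower bound on $\bigl(\tfrac kn(1-\tfrac kn)\bigr)^\gamma$ used in Lemma~\ref{lem:jn-conv} ``is no longer valid near $k=1$'' and therefore introduce a dyadic block decomposition before applying Doob's inequality. In fact an analogous uniform bound \emph{is} available: on $1\le k\le k_n^\ast$ one has $\tfrac kn\ge\tfrac1n$ and $1-\tfrac kn\ge 1-\tfrac{k_n^\ast}{n}=1-cn^{\kappa-1}\to 1$, hence $\bigl(\tfrac kn(1-\tfrac kn)\bigr)^\gamma\ge (1-cn^{\kappa-1})^\gamma n^{-\gamma}$, which is \emph{better} than the constant $(1-\tau^\ast)^\gamma n^{-\gamma}$ from the $A_1$ setting. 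The paper uses exactly this, applies Doob's inequality once, and obtains an upper bound of order $n^{2\gamma+\kappa-2}\to 0$. Similarly, on $k_n^\ast\le k<n$ the paper uses the crude bound $\bigl(\tfrac kn(1-\tfrac kn)\bigr)^\gamma\ge\bigl(\tfrac{k_n^\ast}{n}\cdot\tfrac1n\bigr)^\gamma=c^\gamma n^{(\kappa-2)\gamma}$, again avoiding any dyadic splitting and needing only two regions rather than your three.

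Your dyadic argument is valid and yields the sharper exponent $\tfrac{4\gamma-3}{2(1-\gamma)}$ for the first region, but this extra precision is not needed here; the paper's one-line Doob bound already suffices. So the approaches coincide on the linear parts and diverge only in that the paper exploits a uniform weight bound you overlooked, trading your dyadic bookkeeping for a single maximal inequality per region.
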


\begin{proof}[Proof of Lemma \ref{lem:weighted_jn-conv_A2}]
As in the proof of Lemma \ref{lem:jn-conv}, we decompose the kernel $h(x,y)=g(y-x+\Delta)-g(y-x)$ via Hoeffding's decomposition. Then we have for $k\leq k_n^*$ 
\begin{align*}
& \frac{1}{\big(\frac{k}{n}(1-\frac{k}{n}) \big)^{\gamma}} \big( J_n(k)- \E(J_n(k)) \big) \\
= & \frac{1}{\big(\frac{k}{n}(1-\frac{k}{n}) \big)^{\gamma}} \Big( \frac{n-k_n^*}{n^{3/2}} \sum_{i=1}^k h_1(\xi_i)+\frac{k}{n^{3/2}} \sum_{i=k_n^*+1}^n h_2(\xi_i) +  \frac{1}{n^{3/2}}  \sum_{ i=1}^ {k} \sum_{j=k_n^*+1}^n \Psi(\xi_i,\xi_j) \Big).
\end{align*}
We show that the maximum of each term on the right hand side converges in probability to zero, as n goes to infinity. Recall that $k_n^*\approx cn^{\kappa},~ \kappa=\frac{1-2\gamma}{2(1-\gamma)}$ and $0<\gamma<\frac{1}{2},$ i.e.\ $0<\kappa<\frac{1}{2}.$

Regarding the first term, we get 

\begin{align*}
& \max_{1\leq k \leq k_n^* } \frac{1}{\big(\frac{k}{n}(1-\frac{k}{n}) \big)^{\gamma}} \frac{n-k_n^*}{n^{3/2}}  \big|  \sum_{i=1}^k h_1(\xi_i) \big| \\
=& \max_{1\leq k \leq k_n^* } \frac{1}{\big(1-\frac{k}{n} \big)^{\gamma}} \frac{n^{\gamma}}{k^{\gamma}} \frac{(n-cn^{\kappa})}{n^{3/2}} \big|  \sum_{i=1}^k h_1(\xi_i) \big| = \max_{1\leq k \leq k_n^* } \frac{n^{\gamma}}{\big(1-\frac{k}{n} \big)^{\gamma}}  \frac{(1-cn^{\kappa-1})}{\sqrt{n}} \frac{1}{k^{\gamma}} \big|  \sum_{i=1}^k h_1(\xi_i) \big| \\
\leq & \frac{n^{\gamma}}{\big(1-\frac{k_n^*}{n} \big)^{\gamma}}  \frac{(1-cn^{\kappa-1})}{\sqrt{n}} \max_{1\leq k \leq k_n^* }  \frac{1}{k^{\gamma}} \big|  \sum_{i=1}^k h_1(\xi_i) \big| = (1-cn^{\kappa-1})^{1-\gamma} \, n^{\gamma-1/2} \max_{1\leq k \leq k_n^* }  \frac{1}{k^{\gamma}} \big|  \sum_{i=1}^k h_1(\xi_i) \big|.
\end{align*}
As $((1/k)^{\gamma})_{1\leq k \leq k_n^*}$ is decreasing, we may apply the H\'{a}jek-R\'{e}nyi Inequlity and obtain 
\begin{align*}
& P\Big( \max_{1\leq k \leq k_n^* } \frac{1}{\big(\frac{k}{n}(1-\frac{k}{n}) \big)^{\gamma}} \frac{n-k_n^*}{n^{3/2}}  \big|  \sum_{i=1}^k h_1(\xi_i) \big| \geq \varepsilon \Big) \\
\leq & P\Big( \max_{1\leq k \leq k_n^* } \frac{1}{k^{\gamma}} \big|  \sum_{i=1}^k h_1(\xi_i) \big| \geq \varepsilon (1-cn^{\kappa-1})^{\gamma-1} \, n^{1/2-\gamma} \Big) \\
\leq & \frac{1}{\varepsilon^2 (1-cn^{\kappa-1})^{2\gamma-2} \, n^{1-2\gamma}} \sum_{i=1}^{k_n^*} \frac{1}{i^{2\gamma}} \Var(h_1(\xi_i)).
\end{align*}
As the $(\xi_i)_{i\geq 1}$ are identically distributed and as $\sum_{i=1}^{k_n^*} \frac{1}{i^{2\gamma}} \leq \int_{0}^{k_n^*} \frac{1}{x^{2\gamma}}dx $, we have 
\begin{align*}
\sum_{i=1}^{k_n^*} \frac{1}{i^{2\gamma}} \Var(h_1(\xi_i)) \leq \Var(h_1(\xi_1))\int_{0}^{k_n^*} \frac{1}{x^{2\gamma}}dx =\Var(h_1(\xi_1))(k_n^*)^{-2\gamma+1} = \Var(h_1(\xi_1)) (cn^{\kappa})^{-2\gamma+1},
\end{align*}
 where $\Var(h_1(\xi_1))$ is constant. Thus,
\begin{align*}
& P\Big( \max_{1\leq k \leq k_n^* } \frac{1}{\big(\frac{k}{n}(1-\frac{k}{n}) \big)^{\gamma}} \frac{n-k_n^*}{n^{3/2}}  \big|  \sum_{i=1}^k h_1(\xi_i) \big| \geq \varepsilon \Big)  \\
\leq  & \frac{(cn^{\kappa})^{-2\gamma+1}}{\varepsilon^2 (1-cn^{\kappa-1})^{2\gamma-2} \, n^{1-2\gamma}}  \Var(h_1(\xi_1)) \\
= & \frac{c^{-2 \gamma+1}}{\varepsilon^2} \frac{1}{(1-cn^{\kappa-1})^{2\gamma-2} \, n^{2\gamma\kappa-\kappa-2\gamma+1}} \Var(h_1(\xi_1)) \longrightarrow 0, \text{ as } n \rightarrow \infty,
\end{align*}
since by our choice of $\kappa$ we have $\kappa-1<0$ and $2\gamma \kappa-\kappa-2\gamma+1=\frac{1}{2(\gamma-1)}+1 >0$ for $0<\gamma <1/2$. For the second term we have 
\begin{align*}
& \max_{1\leq k \leq k_n^* } \frac{1}{\big(\frac{k}{n}(1-\frac{k}{n}) \big)^{\gamma}} \frac{k}{n^{3/2}}  \big|  \sum_{i=k_n^*+1}^n h_2(\xi_i) \big| \leq  \frac{(k_n^*)^{1-\gamma} n^{\gamma}}{\big(1-\frac{k_n^*}{n}\big)^{\gamma}n^{3/2}} \big|  \sum_{i=k_n^*+1}^n h_2(\xi_i) \big|  \\
= &  \frac{(cn^{\kappa})^{1-\gamma} n^{\gamma}}{(1-cn^{\kappa -1})^{\gamma} n^{3/2}} \big|  \sum_{i=k_n^*+1}^n h_2(\xi_i) \big| = \frac{(cn^{\kappa})^{1-\gamma} }{(1-cn^{\kappa -1})^{\gamma} n^{3/2-\gamma}} \big|  \sum_{i=k_n^*+1}^n h_2(\xi_i) \big|.
\end{align*}
With Chebychev's inequality, we obtain 
\begin{align*}
& P \Big( \max_{1\leq k \leq k_n^* } \frac{1}{\big(\frac{k}{n}(1-\frac{k}{n}) \big)^{\gamma}} \frac{k}{n^{3/2}}  \big|  \sum_{i=k_n^*+1}^n h_2(\xi_i) \big| \geq \varepsilon \Big)  \\
\leq & P\Big( \big|\sum_{i=k_n^*+1}^n h_2(\xi_i) \big| \geq \varepsilon \frac{(1-cn^{\kappa -1})^{\gamma} n^{3/2-\gamma}}{(cn^{\kappa})^{1-\gamma}} \Big) \leq  \frac{1}{\varepsilon^2} \frac{(cn^{\kappa})^{2-2\gamma} }{(1-cn^{\kappa -1})^{2\gamma} n^{3-2\gamma}} \Var \Big( \sum_{i=k_n^*+1}^n h_2(\xi_i) \Big) \\
 = & \frac{1}{\varepsilon^2} \frac{(cn^{\kappa})^{2-2\gamma} (n-k_n^*) }{(1-cn^{\kappa -1})^{2\gamma} n^{3-2\gamma}} \Var ( h_2(\xi_1) ) =  \frac{1}{\varepsilon^2} \frac{(cn^{\kappa})^{2-2\gamma} \, n(1-cn^{\kappa-1}) }{(1-cn^{\kappa -1})^{2\gamma} n^{3-2\gamma}} \Var ( h_2(\xi_1) ) \\
=&  \frac{1}{\varepsilon^2} \frac{(cn^{\kappa})^{2-2\gamma} }{n^{2-2\gamma}} (1-cn^{\kappa-1})^{1-2\gamma}  \Var ( h_2(\xi_1) ) = \frac{1}{\varepsilon^2} (cn^{\kappa-1})^{2-2\gamma} (1-cn^{\kappa-1})^{1-2\gamma}  \Var ( h_2(\xi_1) ).
\end{align*}
This converges to zero for $n\rightarrow \infty,$ as $\Var ( h_2(\xi_i))$ is constant and as $(\kappa-1)(2-2\gamma)=-1<0$. For the third term, we use analogous arguments as in the proof of Lemma \ref{lem:jn-conv}. Here we have $ (\frac{k}{n} (1-\frac{k}{n}))^\gamma\geq n^{-\gamma}(1-\frac{k_n^\ast}{n})^{\gamma} = (1-cn^{\kappa-1})^{\gamma} n^{-\gamma},$ 
and hence
\[
 \frac{1}{\big(\frac{k}{n}(1-\frac{k}{n} ) \big)^\gamma n^{3/2}} \leq \frac{n^{\gamma-3/2}}{(1-cn^{\kappa-1})^{\gamma}} .
\]
We apply Doob's maximal inequality and obtain
\begin{align*}
& P\Big(\max_{1\leq k\leq k_n^\ast} \frac{1}{\big(\frac{k}{n}(1-\frac{k}{n} ) \big)^\gamma n^{3/2}}  \big|\sum_{i=1}^k \sum_{j=k_n^\ast}^n \Psi(\xi_i,\xi_j)\big| \geq \varepsilon  \Big)   \\
\leq & P\Big(\max_{1\leq k\leq k_n^\ast}  \frac{n^{\gamma-3/2}}{(1-cn^{\kappa-1})^{\gamma}} \big| \sum_{i=1}^k \sum_{j=k_n^\ast}^n \Psi(\xi_i,\xi_j)\big| \geq \varepsilon  \Big) \\
 \leq & P\Big(\max_{1\leq k\leq k_n^\ast}  \big| \sum_{i=1}^k \sum_{j=k_n^\ast}^n \Psi(\xi_i,\xi_j)\big| \geq  \varepsilon \frac{(1-cn^{\kappa-1})^{\gamma}}{n^{\gamma-3/2}} \Big) 
 \leq  \frac{n^{2\gamma-3}}{\varepsilon^2 (1-cn^{\kappa-1})^{2\gamma}} \Var\Big(  \sum_{i=1}^{k_n^*} \sum_{j=k_n^\ast}^n \Psi(\xi_i,\xi_j)\Big)  \\
 = & \frac{n^{2\gamma-3}}{\varepsilon^2 (1-cn^{\kappa-1})^{2\gamma}} k_n^\ast (n-k_n^\ast) \Var(\Psi(\xi_1,\xi_2)) 
 =  \frac{n^{2\gamma-3} \, cn^{\kappa+1} (1-cn^{\kappa-1})}{\varepsilon^2 (1-cn^{\kappa-1})^{2\gamma}}  \Var(\Psi(\xi_1,\xi_2)) \\
= & \frac{c }{\varepsilon^2 }(1-cn^{\kappa-1})^{1-2\gamma} \, n^{2\gamma+\kappa-2} \Var(\Psi(\xi_1,\xi_2)) \longrightarrow 0, 
\end{align*}
for $n\rightarrow \infty$, as $2\gamma+\kappa-2<0$ and $\kappa-1<0$. Altogether we have shown that \[\max_{1\leq k\leq k_n^\ast} \frac{1}{(\frac{k}{n}(1-\frac{k}{n}))^\gamma} |J_n(k)-\E J_n(k)| \rightarrow 0.\] 

In an analogous way, we show that $\max_{ k_n^\ast \leq k< n} \frac{1}{(\frac{k}{n}(1-\frac{k}{n}))^\gamma} |J_n(k)-\E J_n(k)| \rightarrow 0$. For $k_n^* \leq k \leq n-1$, we have $J_n(k) =\frac{1}{n^{3/2}} \sum_{i=1}^{k_n^*} \sum_{j=k+1}^n h(\xi_i, \xi_j)$ and 
\begin{align*}
&\frac{1}{\big(\frac{k}{n}(1-\frac{k}{n}) \big)^{\gamma}} \big( J_n(k) - \E(J_n(k)) \big) \\
=&\frac{1}{\big(\frac{k}{n}(1-\frac{k}{n}) \big)^{\gamma}\, n^{3/2}}  \Big( (n-k) \sum_{i=1}^{k_n^*} h_1(\xi_i) +k_n^* \sum_{i=k+1}^n h_2(\xi_i) +  \sum_{i=1}^{k_n^*} \sum_{j=k+1}^n \Psi (\xi_i, \xi_j) \Big).
\end{align*}
Regarding the coefficient of the first term, we obtain for $k_n^* \leq k \leq n-1$
\begin{align*}
&\frac{1}{\big(\frac{k}{n}(1-\frac{k}{n}) \big)^{\gamma} \, n^{3/2}}   (n-k) 
= \frac{n^{\gamma}}{k^{\gamma}} \frac{n^{\gamma}}{(n-k)^{\gamma}} \frac{1}{n^{3/2}} (n-k) 
= \frac{n^{2\gamma-3/2}}{k^{\gamma}} (n-k)^{1-\gamma} \\
&\leq \frac{n^{2\gamma-3/2}}{(k_n^*)^{\gamma}} n^{1-\gamma} 
= \frac{n^{2\gamma-3/2}}{(cn^{\kappa})^{\gamma}} n^{1-\gamma} 
=\frac{1}{c^{\gamma}} n^{\gamma-\kappa \gamma-1/2}.
\end{align*}
Thus, together with Chebyshev's inequality we obtain
\begin{align*}
& \P \Big(\max_{k_n^* \leq k < n } \frac{1}{\big(\frac{k}{n}(1-\frac{k}{n}) \big)^{\gamma} \, n^{3/2}}   (n-k)\Big| \sum_{i=1}^{k_n^*} h_1(\xi_i)\Big|  \geq \varepsilon \Big)  
\leq  \P \Big(\frac{1}{c^{\gamma}} n^{\gamma-\kappa \gamma-1/2} \Big|\sum_{i=1}^{k_n^*} h_1(\xi_i) \Big|\geq \varepsilon \Big)   \\
&\leq \frac{1}{\varepsilon^2} \frac{1}{c^{2\gamma}} n^{2\gamma-2\kappa \gamma-1}  \Var \Big(\Big| \sum_{i=1}^{k_n^*} h_1(\xi_i)\Big| \Big)
 = \frac{1}{\varepsilon^2} \frac{1}{c^{2\gamma}} n^{2\gamma-2\kappa \gamma-1} k_n^*  \Var ( h_1(\xi_1) ) \\
& = \frac{1}{\varepsilon^2} \frac{1}{c^{2\gamma-1}} n^{2\gamma-2\kappa \gamma-1+\kappa }  \Var ( h_1(\xi_1) ).
\end{align*}
This converges to zero for $n \rightarrow \infty $, as $2\gamma-2\kappa \gamma-1+\kappa <0$ for $\gamma<1/2$ and $\kappa<1$. Regarding the second and third term, note that for $k_n^* \leq k \leq n-1$
\begin{align*}
\Big(\frac{k}{n}\big(1-\frac{k}{n}\big) \Big)^{\gamma} \geq \Big(\frac{k_n^*}{n} \cdot \frac{1}{n}\Big)^{\gamma} =c^{\gamma} n^{\kappa \gamma-2\gamma}
\end{align*}
and hence
\begin{align}\label{ineq:weight}
\frac{1}{\big(\frac{k}{n}(1-\frac{k}{n}) \big)^{\gamma}\, n^{3/2}} 
\leq \frac{1}{c^{\gamma} n^{\kappa \gamma-2\gamma} \, n^{3/2}} 
= \frac{1}{c^{\gamma}} n^{2\gamma-\kappa \gamma -3/2}.
\end{align}
Then we get with Kolmogorov's maximal inequality
\begin{align*}
& \P \Big( \max_{k_n^* \leq k < n } \frac{1}{\big(\frac{k}{n}(1-\frac{k}{n}) \big)^{\gamma} \, n^{3/2}} k_n^* \Big| \sum_{i=k+1}^n h_2(\xi_i) \Big|\geq \varepsilon  \Big) \\
&\leq \P \Big( \max_{k_n^* \leq k < n } \frac{1}{c^{\gamma}} n^{2\gamma-\kappa \gamma -3/2} k_n^* \Big| \sum_{i=k+1}^n h_2(\xi_i) \Big| \geq \varepsilon \Big) \\
&= \P \Big( \max_{k_n^* \leq k < n } c^{1-\gamma} n^{2\gamma-\kappa \gamma -3/2+\kappa}  \Big| \sum_{i=k+1}^n h_2(\xi_i) \Big| \geq \varepsilon  \Big) \\
&= \P \Big( \max_{1 \leq k \leq n-k_n^* } c^{1-\gamma} n^{2\gamma-\kappa \gamma -3/2+\kappa}  \Big| \sum_{i=1}^{k} h_2(\xi_{i}) \Big| \geq \varepsilon  \Big)\\ 
& \leq \frac{1}{\varepsilon^2} c^{2-2\gamma}\, n^{4\gamma-2\kappa \gamma -3+2\kappa} \Var \Big(\Big| \sum_{i=1}^{n-k_n^*} h_2(\xi_i)\Big| \Big) \\
&= \frac{1}{\varepsilon^2} c^{2-2\gamma} \,  n^{4\gamma-2\kappa \gamma -3+2\kappa} (n-k_n^*) \Var  (h_2(\xi_1)) \\
&= \frac{1}{\varepsilon^2} c^{2-2\gamma} (1-cn^{\kappa-1}) \, n^{4\gamma-2\kappa \gamma -2+2\kappa} \Var  (h_2(\xi_1)).
\end{align*}
This converges to zero for $n \rightarrow \infty $, as $4\gamma-2\kappa \gamma -2+2\kappa <0$ for $\gamma<1/2$. Regarding the last term, we use \eqref{ineq:weight} and Doob's maximal inequality to obtain
\begin{align*}
&\P\Big( \max_{k_n^* \leq k < n }  \frac{1}{\big(\frac{k}{n}(1-\frac{k}{n}) \big)^{\gamma}\, n^{3/2}} \Big| \sum_{i=1}^{k_n^*} \sum_{j=k+1}^n \Psi (\xi_i, \xi_j) \Big| \geq \varepsilon  \Big) \\
 \leq & \P\Big( \max_{k_n^* \leq k < n } \frac{1}{c^{\gamma}} n^{2\gamma-\kappa \gamma -3/2} \Big| \sum_{i=1}^{k_n^*} \sum_{j=k+1}^n \Psi (\xi_i, \xi_j) \Big| \geq \varepsilon  \Big) \\
 = & \P\Big( \max_{1 \leq k < n-k_n^* } \frac{1}{c^{\gamma}} n^{2\gamma-\kappa \gamma -3/2} \Big| \sum_{i=1}^{k_n^*} \sum_{j=1}^k \Psi (\xi_i, \xi_{k_n^*+j}) \Big| \geq \varepsilon  \Big) \\
 \leq & \frac{1}{\varepsilon^2c^{2 \gamma}} n^{4\gamma-2\kappa \gamma -3}  \Var \Big| \sum_{i=1}^{k_n^*} \sum_{j=1}^{n-k_n^*} \Psi (\xi_i, \xi_{k_n^*+j}) \Big|  \\
=& \frac{1}{\varepsilon^2c^{2 \gamma}} n^{4\gamma-2\kappa \gamma -3} k_n^*(n-k_n^*) \Var(\Psi (\xi_1, \xi_{k_n^*+1})) \\
=& \frac{1}{\varepsilon^2c^{2 \gamma}} n^{4\gamma-2\kappa \gamma -3} cn^{\kappa}(n-cn^{\kappa}) \Var(\Psi (\xi_1, \xi_{k_n^*+1})) \\
=& \frac{1}{\varepsilon^2c^{2 \gamma-1}} (1-cn^{\kappa-1}) n^{4\gamma-2\kappa \gamma -2+\kappa} \Var(\Psi (\xi_1, \xi_{k_n^*+1})).
\end{align*}
This goes to zero for $n\rightarrow \infty$, as $4\gamma-2\kappa \gamma -2+\kappa<0$ for $\gamma<1/2$.
\end{proof}

\section{Auxiliary results from the literature}
\begin{lemma}[\cite{HR:1955}] \label{H-R-inequality}
Let $\{ \varepsilon_i,~ 1\leq i \leq n  \}$ be independent random variables with finite second moments and let $\{ b_i, ~ i\geq 1\}$ be a positive, decreasing real sequence. Then for any $\alpha>0$ 
\begin{align*}
P\Big( \max_{1\leq k \leq n } b_k \Big| \sum_{i=1}^k (\varepsilon_i-E(\varepsilon_i))\Big| \geq \alpha \Big) \leq \frac{1}{\alpha^2} \sum_{i=1}^n \Var(\varepsilon_i)b_i^2 
\end{align*}
\end{lemma}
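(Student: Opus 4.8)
The plan is to reduce Lemma~\ref{H-R-inequality} to the classical Kolmogorov maximal-inequality argument, dealing with the weights by a telescoping (Abel-type) decomposition of $b_k^2$. Without loss of generality I would assume $\E(\varepsilon_i)=0$ (replace $\varepsilon_i$ by $\varepsilon_i-\E(\varepsilon_i)$), and write $S_k=\sum_{i=1}^k\varepsilon_i$, $\sigma_i^2=\Var(\varepsilon_i)$, and $A=\{\max_{1\le k\le n}b_k|S_k|\ge\alpha\}$. First I would split $A$ according to the first index at which the threshold is crossed: set $A_k=\{b_l|S_l|<\alpha\text{ for }l<k\}\cap\{b_k|S_k|\ge\alpha\}$, so that $A=\bigcup_{k=1}^n A_k$ is a disjoint union and, crucially, $A_k\in\sigma(\varepsilon_1,\dots,\varepsilon_k)$. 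On $A_k$ one has $\alpha^2\le b_k^2S_k^2$, hence
\[
  \alpha^2\,P(A)=\alpha^2\sum_{k=1}^n P(A_k)\le\sum_{k=1}^n\E\big[b_k^2S_k^2\,\mathbf{1}_{A_k}\big].
\]

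Next I would rewrite the weights. Setting $b_{n+1}:=0$ (a notational device; $b_{n+1}$ never appears in the statement) and using that $(b_i)$ is nonincreasing, telescoping gives $b_k^2=\sum_{j=k}^n\big(b_j^2-b_{j+1}^2\big)$ with every summand nonnegative. Substituting this and interchanging the order of summation yields
\[
  \alpha^2\,P(A)\le\sum_{k=1}^n\sum_{j=k}^n\big(b_j^2-b_{j+1}^2\big)\,\E\big[S_k^2\mathbf{1}_{A_k}\big]
  =\sum_{j=1}^n\big(b_j^2-b_{j+1}^2\big)\sum_{k=1}^j\E\big[S_k^2\mathbf{1}_{A_k}\big].
\]
The inner sum I would handle exactly as in Kolmogorov's inequality: for $k\le j$ write $S_j=S_k+(S_j-S_k)$; since $S_k\mathbf{1}_{A_k}$ is $\sigma(\varepsilon_1,\dots,\varepsilon_k)$-measurable and the increment $S_j-S_k$ is independent of it with mean zero, the cross term vanishes, so $\E[S_k^2\mathbf{1}_{A_k}]\le\E[S_j^2\mathbf{1}_{A_k}]$. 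As the $A_k$ with $k\le j$ are disjoint, $\sum_{k=1}^j\E[S_j^2\mathbf{1}_{A_k}]=\E[S_j^2\mathbf{1}_{\cup_{k\le j}A_k}]\le\E[S_j^2]=\sum_{i=1}^j\sigma_i^2$. Plugging this in and interchanging summation once more,
\[
  \alpha^2\,P(A)\le\sum_{j=1}^n\big(b_j^2-b_{j+1}^2\big)\sum_{i=1}^j\sigma_i^2
  =\sum_{i=1}^n\sigma_i^2\sum_{j=i}^n\big(b_j^2-b_{j+1}^2\big)=\sum_{i=1}^n\sigma_i^2\,b_i^2,
\]
which is the assertion; the special case $b_i\equiv1$ recovers the usual Kolmogorov maximal inequality.

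I expect the only delicate point to be bookkeeping rather than anything substantive: one must telescope $b_k^2$ (not $b_k$), so that the weight increments $b_j^2-b_{j+1}^2$ factor cleanly out of the two interchanges of summation, and one must keep each first-passage event $A_k$ attached to the ``earlier'' partial sum $S_k$ so that the independence/mean-zero cancellation in the Kolmogorov step goes through. An alternative route would be to apply Doob's maximal inequality to the martingale $N_k=\sum_{i=1}^k b_i\varepsilon_i$ together with the Abel identity $b_kS_k=N_k-\sum_{i=1}^{k-1}(b_i-b_{i+1})S_i$, but controlling the remainder sum there is messier, so I would prefer the direct argument above.
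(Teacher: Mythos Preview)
Your argument is correct and is essentially the classical proof of the H\'ajek--R\'enyi inequality. Note, however, that the paper does not give its own proof of this lemma: it is listed under ``Auxiliary results from the literature'' and simply cited to \cite{HR:1955}, so there is nothing to compare against beyond observing that your first-passage decomposition plus Abel telescoping of $b_k^2$ reproduces the original argument.
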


\bibliographystyle{abbrvnat.bst}
\bibliography{literature.bib}

\begin{thebibliography}{17}
\providecommand{\natexlab}[1]{#1}
\providecommand{\url}[1]{\texttt{#1}}
\expandafter\ifx\csname urlstyle\endcsname\relax
  \providecommand{\doi}[1]{doi: #1}\else
  \providecommand{\doi}{doi: \begingroup \urlstyle{rm}\Url}\fi

\bibitem[Berkes et~al.(2009)Berkes, Gombay, and Horv{\'a}th]{BGH:2009}
I.~Berkes, E.~Gombay, and L.~Horv{\'a}th.
\newblock Testing for changes in the covariance structure of linear processes.
\newblock \emph{Statist. Plann. Inference}, 139\penalty0 (6):\penalty0
  2044--2063, 2009.

\bibitem[Billingsley(1999)]{B:1999}
P.~Billingsley.
\newblock \emph{{Convergence of Probability Measures}}.
\newblock Wiley Series in Probability and Statistics: Probability and
  Statistics. John Wiley \& Sons, Inc., New York, second edition, 1999.
\newblock ISBN 0-471-19745-9.
\newblock \doi{10.1002/9780470316962}.
\newblock URL \url{https://mathscinet.ams.org/mathscinet-getitem?mr=1700749}.
\newblock A Wiley-Interscience Publication.

\bibitem[Cs\"{o}rg\H{o} and Horv\'{a}th(1997)]{CsH:1997}
M.~Cs\"{o}rg\H{o} and L.~Horv\'{a}th.
\newblock \emph{{Limit Theorems in Change-Point Analysis}}.
\newblock Wiley Series in Probability and Statistics. John Wiley \& Sons, Ltd.,
  Chichester, 1997.
\newblock ISBN 0-471-95522-1.
\newblock URL \url{https://mathscinet.ams.org/mathscinet-getitem?mr=2743035}.

\bibitem[Cs\"{o}rg\H{o} et~al.(2008)Cs\"{o}rg\H{o}, Szyszkowicz, and
  Wang]{CsSW:2008}
M.~Cs\"{o}rg\H{o}, B.~Szyszkowicz, and Q.~Wang.
\newblock {Asymptotics of Studentized U-type processes for changepoint
  problems}.
\newblock \emph{Acta Math. Hungar.}, 121\penalty0 (4):\penalty0 333--357, 2008.

\bibitem[Dehling et~al.(2013)Dehling, Rooch, and Taqqu]{DRT:2013}
H.~Dehling, A.~Rooch, and M.~Taqqu.
\newblock Non-parametric change-point tests for long-range dependent data.
\newblock \emph{Scandinavian Journal of Statistics}, 40\penalty0 (1):\penalty0
  153--173, 2013.

\bibitem[Dehling et~al.(2015)Dehling, Fried, Garcia, and Wendler]{DFGW:2015}
H.~Dehling, R.~Fried, I.~Garcia, and M.~Wendler.
\newblock {Change-Point Detection Under Dependence Based on Two-Sample
  U-Statistics}.
\newblock In D.~Dawson, R.~Kulik, M.~Ould~Haye, B.~Szyszkowicz, and Y.~Zhao,
  editors, \emph{Asymptotic Laws and Methods in Stochastics}, volume~76, pages
  195--220. Springer, New York, NY, 2015.

\bibitem[Dehling et~al.(2017)Dehling, Rooch, and Taqqu]{DRT:2017}
H.~Dehling, A.~Rooch, and M.~Taqqu.
\newblock Power of change-point tests for long-range dependent data.
\newblock \emph{Electronic Journal of Statistics}, 11\penalty0 (1):\penalty0
  2168--2198, 2017.

\bibitem[Dehling et~al.(2022)Dehling, Vuk, and Wendler]{DVW:2022}
H.~Dehling, K.~Vuk, and M.~Wendler.
\newblock Change-point detection based on weighted two-sample u-statistics.
\newblock \emph{Electronic Journal of Statistics}, 16\penalty0 (1):\penalty0
  862--891, 2022.

\bibitem[Ferger(1994)]{F:1994}
D.~Ferger.
\newblock {On the power of nonparametric changepoint-tests}.
\newblock \emph{Metrika}, 41\penalty0 (1):\penalty0 277--292, 1994.

\bibitem[Gombay(2000)]{G:2000b}
E.~Gombay.
\newblock Comparison of u-statistics in the change-point problem and in
  sequential change detection.
\newblock \emph{Period. Math. Hungar.}, 41:\penalty0 157--166, 2000.

\bibitem[H{\'a}jek and R{\'e}nyi(1955)]{HR:1955}
J.~H{\'a}jek and A.~R{\'e}nyi.
\newblock Generalization of an inequality of kolmogorov.
\newblock \emph{Acta Mathematica Hungarica}, 6\penalty0 (3-4):\penalty0
  281--283, 1955.

\bibitem[Horv{\'a}th et~al.(2020)Horv{\'a}th, Miller, and Rice]{HMR:2020}
L.~Horv{\'a}th, C.~Miller, and G.~Rice.
\newblock A new class of change point test statistics of r{\'e}nyi type.
\newblock \emph{Journal of Business \& Economic Statistics}, 38\penalty0
  (3):\penalty0 570--579, 2020.

\bibitem[Horv{\'a}th et~al.(2021)Horv{\'a}th, Rice, and Zhao]{HRZ:2021}
L.~Horv{\'a}th, G.~Rice, and Y.~Zhao.
\newblock Change point analysis of covariance functions: A weighted cumulative
  sum approach.
\newblock \emph{J. Multivariate Anal.}, page 104877, 2021.

\bibitem[Ra{\v{c}}kauskas and Wendler(2020)]{RW:2020}
A.~Ra{\v{c}}kauskas and M.~Wendler.
\newblock Convergence of u-processes in h{\"o}lder spaces with application to
  robust detection of a changed segment.
\newblock \emph{Statist. Papers}, 61\penalty0 (4):\penalty0 1409--1435, 2020.

\bibitem[Robbins et~al.(2011)Robbins, Gallagher, Lund, and Aue]{RGLA:2011}
M.~Robbins, C.~Gallagher, R.~Lund, and A.~Aue.
\newblock Mean shift testing in correlated data.
\newblock \emph{J. Time Series Anal.}, 32\penalty0 (5):\penalty0 498--511,
  2011.

\bibitem[Szyszkowicz(1991)]{S:1991}
B.~Szyszkowicz.
\newblock Changepoint problems and contiguous alternatives.
\newblock \emph{Statist. Probab. Lett.}, 11\penalty0 (4):\penalty0 299--308,
  1991.

\bibitem[Xie et~al.(2014)Xie, Li, and Xiong]{XLX:2014}
H.~Xie, D.~Li, and L.~Xiong.
\newblock Exploring the ability of the pettitt method for detecting change
  point by monte carlo simulation.
\newblock \emph{Stoch. Environ. Res. Risk Assess.}, 28:\penalty0 1643--1655,
  2014.

\end{thebibliography}

\end{document}